\numberwithin{equation}{section}
\theoremstyle{plain}
\newtheorem{thm}{Theorem}[section]
\newtheorem{lem}{Lemma}[section]
\newtheorem{defn}{Definition}[section]
\newtheorem{rem}{Remark}[section]
\newtheorem{cor}{Corollary}[section]
\newtheorem{prob}{Problem}[section]
\begin{document}

\begin{frontmatter}
\title{Under Collatz conjecture the Collatz mapping has no an asymptotic   mixing  property $\pmod{3}$}%\thanksref{T1}}
\runtitle{Under Collatz conjecture the Collatz mapping has no $\cdots$}
%\thankstext{T1}{Footnote to the title with the `thankstext' command.}

\begin{aug}
\author{\fnms{Gogi} \snm{Pantsulaia}\thanksref{t1}\ead[label=e1]{g.pantsulaia@gtu.ge}}
%\and
%$\author{\fnms{Second} \snm{Author}\thanksref{t3}\ead[label=e2]{second@somewhere.com}}
\affiliation{Georgian Technical University\\
 I.Vekua Institute of Applied Mathematics }%\thanksmark{m1}}

\thankstext{t1}{The author partially is
supported by Shota Rustaveli National Science Foundation's Grant
no 31/25.}
\runauthor{G.Pantsulaia}

\address{Kostava Street. 77 , Tbilisi
DC  0175, Georgian Republic \\
\printead{e1}}
%\phantom{E-mail:\ }\printead*{}}
\end{aug}

\begin{abstract}By using  properties of Markov  homogeneous chains, we compute Banach measure of the set of all natural numbers which under Collatz motion after  $n$ step will visit the set of even numbers in $\mathrm{N}$. As consequence, we get that the relative frequency
of even numbers in the sequence of $n$-th coordinates of all Collatz sequences is equal to the number
$
\frac{2}{3}+\frac{(-1)^{n+1}}{3\times 2^{n+1}}.
$
 It is shown also that an analogous numerical characteristic for numbers of the form $3m+1$ is equal to the number
 $
 \frac{3}{5}+ \frac{(-1)^{n+1}}{15 \times 2^{2(n-1)}}.
$  By using these formulas it is proved that under Collatz conjecture the Collatz mapping has no an asymptotic   mixing  property $\pmod{3}$.
It is constructed also an example of a real-valued function on the cartesian product $\mathbf{N}^2$ of the set of all natural numbers $\mathbf{N}$ such that an equality its repeated integrals ( with respect to Banach measure in $\mathbf{N}$ ) implies that Collatz conjecture fails. In addition, it is demonstrated that Collatz conjecture fails for supernatural numbers.

\end{abstract}

\begin{keyword}[class=MSC]
\kwd[Primary ]{60J10}
\kwd{11B50 }
\kwd[; secondary ]{60J20}
{11K31}
\end{keyword}

\begin{keyword}
\kwd{Collatz conjecture}
\kwd{relative frequence}
\kwd{ Markov chains}
\end{keyword}

\end{frontmatter}

\section{Introduction}

 There are many works where the authors intensively use a probabilistic approach for the studying  of asymptotic properties of various general systems. For example,
probabilistic proofs of  asymptotic distribution formulas  of the number of short cycles in graphs with a given degree sequence and for
hypergraphs  are given in \cite{Bollobas1980}. By virtue the non-lattice property of the lifetime distribution and the Hewitt-Savage zero-one law,  another probabilistic proof of Blackwell's renewal theorem is given in \cite{Lindvall77} and so on. The present note is like to these article.
More precisely, by virtue properties of Markov  homogeneous chains and Banach measure $P$ in $\mathrm{N}$, we prove that under Collatz conjecture the Collatz mapping has no an asymptotic   mixing  property $\pmod{3}$(This notion is given in Section 5).
 Recall, that the Collatz conjecture is a conjecture in mathematics named after Lothar Collatz, who first proposed it in 1937.
The conjecture is also known as the 3n + 1 conjecture, the Ulam conjecture (after Stanislaw Ulam), Kakutani's problem (after Shizuo Kakutani),
the Thwaites conjecture (after Sir Bryan Thwaites), Hasse's algorithm (after Helmut Hasse), or the Syracuse problem (cf. \cite{r1}).

Take any natural number $n$. If $n$ is even, divide it by $2$ to get $n / 2$. If $n$ is odd, multiply it by $3$ and add $1$ to obtain $3n + 1$.
Repeat the process infinitely. The conjecture is that no matter what number you start with, you will always eventually reach $1$.

As many authors have previously stated, Paul Erd$\acute{o}$s once said, "Mathematics may not be ready for such problems".  Thus
far all evidence indicates he was correct (cf. \cite{r2}).

The conjecture has been checked by computer for all starting values up to $5 × 2^{60} \approx 5.764 \times 10^{18}$ (cf. \cite{r3}).
All initial values tested so far eventually end in the repeating cycle $(4; 2; 1)$, which has only three terms. It is evident that such a
computation result  is not a proof of the Collatz conjecture and it remains today unsolved.

The rest of this note  is the following.

In Section 2 we consider some auxiliary notions and facts from the theory of Markov homogeneous chains and  shift-invariant measure theory in $N$.
In Section 3 we give the formula for a relative frequency
of even numbers in the sequence of $n$-th coordinates of all Collatz sequences. In Section 4 we do same for numbers of the form $3m+1$. In Section 5 we construct a function on the cartesian product $N^2$ of the set of all natural numbers $N$ such that an equality its repeated integrals with respect to Banach measure implies that Collatz conjecture fails.  By using the main result of  Section 4, it is proved (under Collatz conjecture) that Collatz mapping has no an asymptotic   mixing   property $\pmod{3}$. In Section 6 we demonstrate that  Collatz conjecture fails for  supernatural numbers.

\section{Auxiliary notions and facts  from the theory of Markov homogeneous chains and invariant measure theory}

Let $(\Omega,{\mathcal{F}},P)$ be a probability space.
Assume that we have a physical system, which after each step
changes its phase position. Assume that the possible
positions are $\epsilon_0$ and $\epsilon_1$.  Let $\xi_n(\omega)$ be a position of physical system
after $n$ steps $(\omega \in \Omega)$. Clearly, the chain of
logical transitions

\begin{equation}
\xi_0(\omega) \to \xi_1(\omega) \to \cdots ~~(\omega \in \Omega)\label{ccs}
\end{equation} depends on the chance factor. Assume
that the following regularity condition is preserved: if after
~$n$ steps the system is in position ~$\epsilon_i$~, then,
independently of its early positions it will pass to position
$\epsilon_j$ with probability $p_{ij}$, i.e.,

\begin{equation}
p_{ij}=P(\{\omega:\xi_{n+1}(\omega)=\epsilon_j~|\xi_n(\omega)=\epsilon_i\}),\label{ccs}
i,j =0,1.
\end{equation}

The above described model is called Markov homogeneous chain. The following matrix
\[ {\mathcal{P}}=\left(
\begin{array}{cl}
p_{00} & p_{01} \\
p_{10}& p_{11}
\end{array}\right )\]
is called the matrix of transition probabilities of the physical system  after~$1$~ step.

Besides there is also given  the vector of initial distributions of the physical  system , i.e.,

\begin{equation}
(P_0^{(0)},P_1^{(0)}) =(P(\{ \omega:\xi_0(\omega)=\epsilon_0\}),P(\{ \omega:\xi_0(\omega)=\epsilon_1\})).\label{ccs}
\end{equation}

Let denote by $P_i(n)(i=0,1)$ the probability that the physical system will be in the
position ~$\epsilon_i$ after~$n$~ steps, i.e.,

\begin{equation}
P_i(n)=P(\{\omega:\xi_n(\omega)=\epsilon_i\})(i=0,1).\label{ccs}
\end{equation}
Then the vector $(P_0^{(n)},P_1^{(n)})$, defined by
\begin{equation}
(P_0^{(n)},P_1^{(n)}) =(P(\{ \omega:\xi_n(\omega)=\epsilon_0\}),P(\{ \omega:\xi_n(\omega)=\epsilon_1\})),\label{ccs}
\end{equation}
is called the vector of distributions of the physical system after~$n$~ steps.

\begin{lem} (\cite{r4}~ For an arbitrary natural number $n$, the following equality
\begin{equation}
(P_0^{(n)}, P_1^{(n)})=(P_0^{(0)}, P_1^{(0)}) \times \left (
\begin{array}{cl}
p_{00} & p_{01} \\
p_{10}& p_{11}
\end{array}\right )^{n}\label{ccs}
\end{equation}
holds true.
\end{lem}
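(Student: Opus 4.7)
The plan is to prove the matrix-power identity by induction on $n$, leveraging the law of total probability together with the Markov homogeneity condition stated in (2.2).

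For the base case $n=1$, I would partition the event $\{\omega : \xi_1(\omega) = \epsilon_i\}$ according to the value of $\xi_0(\omega)$ and apply the law of total probability:
\begin{equation*}
P_i^{(1)} = \sum_{j=0}^{1} P(\xi_1 = \epsilon_i \mid \xi_0 = \epsilon_j)\, P(\xi_0 = \epsilon_j) = \sum_{j=0}^{1} p_{ji}\, P_j^{(0)}.
\end{equation*}
The right-hand side is exactly the $i$-th entry of the row vector $(P_0^{(0)}, P_1^{(0)})\, \mathcal{P}$, which establishes the formula for $n=1$.

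For the inductive step, assuming the identity for some $n \ge 1$, I would first derive the one-step recursion $(P_0^{(n+1)}, P_1^{(n+1)}) = (P_0^{(n)}, P_1^{(n)})\, \mathcal{P}$ by the same total-probability argument, but conditioning on the value of $\xi_n$ rather than $\xi_0$. This is precisely where the homogeneity assumption (2.2) is essential: it guarantees that the one-step transition probabilities $P(\xi_{n+1} = \epsilon_i \mid \xi_n = \epsilon_j)$ equal $p_{ji}$ for \emph{every} $n$, rather than depending on $n$. Chaining this recursion with the inductive hypothesis then yields
\begin{equation*}
(P_0^{(n+1)}, P_1^{(n+1)}) = (P_0^{(0)}, P_1^{(0)})\, \mathcal{P}^{n}\, \mathcal{P} = (P_0^{(0)}, P_1^{(0)})\, \mathcal{P}^{n+1},
\end{equation*}
closing the induction.

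No deep obstacle is expected; the argument is a two-line induction. The only minor point of care is the degenerate situation where some $P_j^{(n)}$ vanishes, so that the conditional probability $P(\,\cdot \mid \xi_n = \epsilon_j)$ is not literally defined. In that case the corresponding term $p_{ji}\, P_j^{(n)}$ in the total-probability sum is already zero under the standard convention, so the matrix identity persists without further adjustment. Hence the bulk of the work is simply bookkeeping, and the substantive content of the lemma is the translation of the homogeneous Markov property into the algebraic fact that the distribution evolves by right-multiplication by $\mathcal{P}$.
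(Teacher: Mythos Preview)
Your induction argument via the law of total probability and the homogeneity assumption (2.2) is correct and is the standard textbook derivation. The paper does not actually prove this lemma at all---it merely cites it from Shiryaev's \textit{Problems in Probability}---so there is no in-paper proof to compare against; your argument is precisely what one would expect such a reference to contain.
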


\medskip

\begin{lem}(\cite{r4})~ If $|p_{00}+p_{11}-1|<1$, then the following equality
\begin{equation}
\lim_{n \to \infty} (P_0^{(n)}, P_1^{(n)})=(\frac{1-p_{11}}{2-p_{00}-p_{11}},\frac{1-p_{00}}{2-p_{00}-p_{11}})\label{ccs}
\end{equation}
holds true.
\end{lem}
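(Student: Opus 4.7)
The plan is to reduce the statement to analyzing the powers of the $2\times 2$ transition matrix $\mathcal{P}$. By Lemma 2.1,
\[
(P_0^{(n)},P_1^{(n)}) = (P_0^{(0)},P_1^{(0)})\,\mathcal{P}^n,
\]
and since $(P_0^{(0)},P_1^{(0)})$ is a probability vector (its entries are nonnegative and sum to $1$), the asymptotics of the left-hand side are controlled entirely by the asymptotics of $\mathcal{P}^n$. So the task reduces to showing that $\mathcal{P}^n$ converges to a rank-one matrix whose rows are the claimed limit vector.

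Next I would exploit the stochastic structure $p_{00}+p_{01}=1$ and $p_{10}+p_{11}=1$ to diagonalize $\mathcal{P}$. Its characteristic polynomial simplifies to $\lambda^2-(p_{00}+p_{11})\lambda+(p_{00}+p_{11}-1)$, whose roots are $\lambda_1=1$ and $\lambda_2=p_{00}+p_{11}-1$. A left $1$-eigenvector is precisely the probability vector
\[
\pi = \Bigl(\tfrac{1-p_{11}}{2-p_{00}-p_{11}},\,\tfrac{1-p_{00}}{2-p_{00}-p_{11}}\Bigr),
\]
as one checks directly from $\pi \mathcal{P}=\pi$ together with $\pi_0+\pi_1=1$; note that the hypothesis $|p_{00}+p_{11}-1|<1$ forces $2-p_{00}-p_{11}\neq 0$, so the division is legitimate.

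I would then write the spectral decomposition
\[
\mathcal{P}^n \;=\; A + \lambda_2^{\,n}\,B,
\]
where $A$ is the projector onto the $\lambda_1=1$ eigenline and has both rows equal to $\pi$, while $B$ is the complementary rank-one matrix associated with $\lambda_2$. The identity can either be verified by a direct induction on $n$ (taking $B$ equal to $I-A$ up to a scalar), or by diagonalization once explicit eigenvectors are written down. Either way, the calculation is mechanical and I would expect this to be the only tedious point; no genuine obstacle arises because the $2\times 2$ case admits a closed form.

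Finally, the hypothesis $|p_{00}+p_{11}-1|<1$ gives $\lambda_2^{\,n}\to 0$, hence $\mathcal{P}^n\to A$ entrywise. Multiplying on the left by the probability vector $(P_0^{(0)},P_1^{(0)})$ and using that each row of $A$ equals $\pi$ yields
\[
\lim_{n\to\infty}(P_0^{(n)},P_1^{(n)}) = (P_0^{(0)}+P_1^{(0)})\,\pi = \pi,
\]
which is the claimed formula. The main conceptual step is the identification of $\pi$ with the claimed limit; the rest is linear algebra, and the condition $|p_{00}+p_{11}-1|<1$ is used exactly once, to kill the non-principal eigenvalue contribution.
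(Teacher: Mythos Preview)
Your proposal is correct and follows essentially the same route as the paper: both arguments write $\mathcal{P}^n$ as the sum of a constant rank-one matrix with rows equal to $\pi$ and a correction term scaled by $(p_{00}+p_{11}-1)^n$, then use the hypothesis to kill the second term and multiply by the initial probability vector. The only cosmetic difference is that the paper states the explicit closed form for $\mathcal{P}^n$ and verifies it by induction, whereas you motivate the same decomposition via the eigenvalues $\lambda_1=1$, $\lambda_2=p_{00}+p_{11}-1$; the content is identical.
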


\begin{proof} Setting

\begin{equation}
{\mathcal{P}}= \left (
\begin{array}{cl}
p_{00} & p_{01} \\
p_{10}& p_{11}
\end{array}\right ),\label{ccs}
\end{equation}

it is obvious to see that

\begin{equation}
{{\mathcal{P}}}^2= \left(
\begin{array}{cl}
p^2_{00} +p_{01}p_{10}& p_{01}(p_{00}+p_{11}) \\
p_{10}(p_{00}+p_{11})& p^2_{11}+p_{01}p_{10}
\end{array}\right ).\label{ccs}
\end{equation}

By using the method of the mathematical  induction, one can easily show the validity of the following equality

\begin{equation}
{{\mathcal{P}}}^n= \frac{1}{2-p_{00}-p_{11}} \left(
\begin{array}{cl}
1-p_{11}&1-p_{00} \\
1-p_{11}& 1-p_{00}
\end{array}\right )$$
$$+\frac{(p_{00} +p_{11}-1)^n}{2-p_{00}-p_{11}} \left(
\begin{array}{cl}
1-p_{00}&-(1-p_{00}) \\
-(1-p_{11})& 1-p_{11}
\end{array} \right).\label{ccs}
\end{equation}

Since $|p_{00}+p_{11}-1|<1$, we deduce that

\begin{equation}
\lim_{n \to \infty}{{\mathcal{P}}}^n= \frac{1}{2-p_{00}-p_{11}} \left(
\begin{array}{cl}
1-p_{11}&1-p_{00} \\
1-p_{11}& 1-p_{00}
\end{array}\right).\label{ccs}
\end{equation}

Take into account the latter relation, by virtue of Lemma 2.1  we get

\begin{equation*}
\lim_{n \to \infty} (P_0^{(n)}, P_1^{(n)})=\lim_{n \to \infty}(P_0^{(0)}, P_1^{(0)}) \times \left (
\begin{array}{cl}
p_{00} & p_{01} \\
p_{10}& p_{11}
\end{array}\right )^{n}=
\end{equation*}

\begin{equation*}
=(P_0^{(0)}, P_1^{(0)}) \times \lim_{n \to \infty}\left (
\begin{array}{cl}
p_{00} & p_{01} \\
p_{10}& p_{11}
\end{array}\right )^{n}=
\end{equation*}

\begin{equation*}
(P_0^{(0)}, P_1^{(0)}) \times \frac{1}{2-p_{00}-p_{11}} \left (
\begin{array}{cl}
1-p_{11}&1-p_{00} \\
1-p_{11}& 1-p_{00}
\end{array}\right )=
\end{equation*}

\begin{equation}
=(\frac{1-p_{11}}{2-p_{00}-p_{11}},\frac{1-p_{00}}{2-p_{00}-p_{11}}). \label{ccs}
\end{equation}
This ends the proof of Lemma 2.2

\end{proof}

\begin{defn} Let $\mathcal{A}$ be a shift invariant algebra of subsets of  the set of all natural numbers $\mathbf{N}$.
A measure $P$ on $\mathcal{A}$ is said to be shift-invariant (also called translation-invariant) if
the condition
\begin{equation}
P(X+1)=P(X) \label{ccs}
\end{equation}
holds true for $X \in \mathcal{A}$ , where $X+1=\{ x+1:x \in X\}$.
\end{defn}

\begin{defn}
We say that $ K \subseteq  \mathbf{N}$ has  the (asymptotic) density  $d(K)$ if the limit
\begin{equation}
 d(K)=\lim_{n \to \infty} \#([1,n]\cap K)/n  \label{ccs}
 \end{equation}
 exists, where $\#(\cdot)$ denotes the counting measure.
\end{defn}

\begin{lem}(\cite{Banach1932}, p. 341) There exists  a shift-invariant
measure $P$ defined on the powerset of $N$ for which $P(\mathbf{N})=1$.
\end{lem}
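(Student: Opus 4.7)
The plan is to construct $P$ via the Hahn--Banach theorem, realizing $P(A)$ as a \emph{Banach limit} applied to the indicator sequence $\chi_A$. On the space $\ell^\infty(\mathbf{N})$ of bounded real sequences $x=(x_k)_{k\ge 1}$, first introduce the sublinear functional
\[
q(x)=\limsup_{n\to\infty}\frac{1}{n}\sum_{k=1}^{n}x_k,
\]
and verify the standard properties $q(x+y)\le q(x)+q(y)$, $q(\lambda x)=\lambda q(x)$ for $\lambda\ge 0$, and $q(\mathbf{1})=1$ on the constant one-sequence $\mathbf{1}$. On the one-dimensional subspace $\mathbf{R}\cdot\mathbf{1}$ set $\ell(c\mathbf{1})=c$, so that $\ell\le q$ holds there.

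Next, invoke the Hahn--Banach theorem to extend $\ell$ to a linear functional $L\colon \ell^\infty(\mathbf{N})\to\mathbf{R}$ still satisfying $L(x)\le q(x)$ for every $x$. The two-sided estimate $-q(-x)\le L(x)\le q(x)$ then yields $L(x)\ge 0$ whenever $x\ge 0$, and $L(\mathbf{1})=1$ by construction. Let $S$ denote the left shift $(Sx)_k=x_{k+1}$. A telescoping computation gives
\[
q(Sx-x)=\limsup_{n\to\infty}\frac{x_{n+1}-x_1}{n}=0,
\]
and symmetrically $q(x-Sx)=0$; applying $L$ to $\pm(Sx-x)$ now forces $L(Sx)=L(x)$ for every bounded sequence $x$.

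Finally, define $P\colon 2^{\mathbf{N}}\to[0,1]$ by $P(A)=L(\chi_A)$. Finite additivity of $P$ follows from linearity of $L$ together with $\chi_{A\sqcup B}=\chi_A+\chi_B$ for disjoint $A,B$; non-negativity and $P(\mathbf{N})=L(\mathbf{1})=1$ are immediate. Since $S\chi_{A+1}=\chi_A$, shift-invariance of $L$ then yields
\[
P(A+1)=L(\chi_{A+1})=L(S\chi_{A+1})=L(\chi_A)=P(A),
\]
which is exactly the required shift-invariance on the powerset of $\mathbf{N}$.

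The essential, and really the only, nontrivial step is choosing the sublinear majorant so that $q$ itself ``absorbs'' the shift in the sense $q(Sx-x)\le 0$; this is what guarantees shift-invariance of \emph{every} Hahn--Banach extension, and thus spares us from having to select an extension explicitly. Once this observation is in place, the remaining verifications are a routine unpacking of definitions, so the write-up can concentrate on the telescoping estimate and the clean passage from the linear functional $L$ on $\ell^\infty(\mathbf{N})$ to the set function $P$ on $2^{\mathbf{N}}$.
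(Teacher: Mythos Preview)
Your argument is correct: this is the classical Banach-limit construction, and every step you list checks out, including the telescoping estimate $q(Sx-x)=0$ and the identification $S\chi_{A+1}=\chi_A$ that transfers shift-invariance from $L$ to $P$.

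As for comparison with the paper: there is nothing to compare against. The paper does not supply a proof of this lemma at all; it merely quotes the result with a reference to Banach's 1932 monograph (p.~341). Your Hahn--Banach construction is precisely the standard proof (and essentially Banach's original argument), so you have in effect filled in what the paper leaves as a citation.
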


\begin{lem}(\cite{Bumby1969}, Examples, p. 371; p. 401 ) If $P$ is a shift-invariant
measure defined on the powerset of $\mathbf{N}$ for which  $P(\mathbf{N})=1$, then
\begin{equation}
P(\{ p + r \mathbf{N} \}) = r^{-1} \label{ccs}
\end{equation}
for all $p \in  [0,\infty)$ and $r \in [1,\infty)$.
\end{lem}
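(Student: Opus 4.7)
The plan is to decompose $\mathbf{N}$ as a disjoint union of translates of $r\mathbf{N}$, one for each residue class modulo $r$, plus a finite leftover block, and then use shift-invariance and finite additivity of $P$ to distribute the total mass $P(\mathbf{N})=1$ equally among the $r$ progressions. Throughout I would interpret $p$ and $r$ in the statement as nonnegative and positive integers respectively, so that $p+r\mathbf{N}\subseteq\mathbf{N}$ is well-defined.

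First I would show that every finite subset of $\mathbf{N}$ has $P$-measure zero. Because $\{k\}=\{1\}+(k-1)$, shift-invariance gives $P(\{k\})=P(\{1\})$ for every $k$, and the finite disjoint union $\bigsqcup_{k=1}^{N}\{k\}\subseteq\mathbf{N}$ has measure $N\cdot P(\{1\})$, which monotonicity bounds by $1$ for all $N$; hence $P(\{1\})=0$ and any finite set has measure zero. Second, I would write down the residue partition
$\mathbf{N}=\{1,2,\ldots,r-1\}\;\sqcup\;\bigsqcup_{j=0}^{r-1}\bigl(r\mathbf{N}+j\bigr),$
where the progressions $r\mathbf{N}+j$ are pairwise disjoint because they lie in distinct residue classes modulo $r$. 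Since each $r\mathbf{N}+j$ is the shift of $r\mathbf{N}$ by $j$, repeated application of shift-invariance yields $P(r\mathbf{N}+j)=P(r\mathbf{N})$ for $j=0,1,\ldots,r-1$. Combining this with the previous step and finite additivity of $P$ gives $1=P(\mathbf{N})=0+r\cdot P(r\mathbf{N})$, so $P(r\mathbf{N})=r^{-1}$. A final application of shift-invariance (shifting by $p$) then delivers $P(p+r\mathbf{N})=P(r\mathbf{N})=r^{-1}$, which is the claim.

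The main obstacle is conceptual rather than computational. One must remember that the Banach measure supplied by Lemma 2.3 is only assumed to be finitely additive, so any reflex to use $\sigma$-additivity (for instance, trying to kill singletons by summing countably many translates of $\{1\}$) has to be resisted; fortunately finite additivity together with monotonicity is already enough both to force $P(\{1\})=0$ and to carry out the $r$-fold split. A minor bookkeeping point is to fix the meaning of $\mathbf{N}$, which I take to be $\{1,2,\ldots\}$, and to read the parameters $p\in[0,\infty)$ and $r\in[1,\infty)$ as integers, so that the set $p+r\mathbf{N}$ is a genuine subset of $\mathbf{N}$; once these conventions are fixed the argument goes through verbatim, and the cases $r=1$ or $p=0$ are handled as degenerate but valid instances of the same decomposition.
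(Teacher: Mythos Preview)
Your argument is correct. The paper itself does not supply a proof of this lemma; it simply attributes the result to Bumby and Ellentuck \cite{Bumby1969} and moves on. Your proof is the standard one and is essentially what one finds in that reference: kill finite sets via shift-invariance plus monotonicity, split $\mathbf{N}$ into the $r$ residue classes modulo $r$ (up to a finite leftover), equate their measures by shift-invariance, and solve $r\cdot P(r\mathbf{N})=1$. Your care about using only finite additivity, and your reading of $p,r$ as integers so that $p+r\mathbf{N}\subseteq\mathbf{N}$, are both appropriate and match how the result is used later in the paper.
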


\begin{lem} Let  $P$ be a shift-invariant probability measure defined on the powerset of $\mathbf{N}$. Let $f: \mathbf{N}\to {\bf R}$ be such a mapping that
 there exists a finite limit $\lim_{n \to \infty}f(n)=A$. Then $\int_{\mathbf{N}}f(n)dP(n)=A$.
\end{lem}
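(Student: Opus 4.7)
The plan is to reduce the statement to the (finitely additive) version of the dominated / bounded convergence intuition, by exploiting the fact that in a shift-invariant probability measure on $\mathbf{N}$, every initial segment is a $P$-null set.

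First I would show that any finite set $F\subset\mathbf{N}$ satisfies $P(F)=0$. This is immediate from shift-invariance: every singleton $\{n\}$ has the same $P$-measure $c$ (by iterating $P(X+1)=P(X)$), and since $P(\{1,2,\ldots,k\})=kc\le P(\mathbf{N})=1$ for all $k$, one must have $c=0$; finite additivity then gives $P(F)=0$ for every finite $F$. Equivalently, one can invoke Lemma~2.5 with $r=k$ to see that $\bigcup_{p=0}^{k-1}(p+k\mathbf{N})$ has total measure $1$, forcing the complementary finite set to be null.

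Next I would observe that the hypothesis $\lim_{n\to\infty}f(n)=A$ makes $f$ bounded on $\mathbf{N}$ (it is bounded on a cofinite tail, and the remaining finitely many values are finite), so $f$ is $P$-integrable in the usual finitely additive sense. Fix $\varepsilon>0$ and choose $N$ with $|f(n)-A|<\varepsilon$ for all $n\ge N$. Write $E=\{1,\ldots,N-1\}$ and $E^{c}=\{n:n\ge N\}$. By the previous step $P(E)=0$ and hence $P(E^{c})=1$, so
\begin{equation*}
\int_{\mathbf{N}}f\,dP=\int_{E}f\,dP+\int_{E^{c}}f\,dP=\int_{E^{c}}f\,dP.
\end{equation*}

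Finally, using $\int_{E^{c}}A\,dP=A\cdot P(E^{c})=A$, I would estimate
\begin{equation*}
\Bigl|\int_{\mathbf{N}}f\,dP-A\Bigr|=\Bigl|\int_{E^{c}}(f-A)\,dP\Bigr|\le \varepsilon\cdot P(E^{c})=\varepsilon.
\end{equation*}
Since $\varepsilon>0$ was arbitrary, $\int_{\mathbf{N}}f\,dP=A$. The only genuinely delicate point—and hence the main obstacle—is the null-set step: one must be careful that finite additivity (not countable additivity) of Banach measure is enough to conclude $P(E)=0$ for every finite $E$, which is exactly what the shift-invariance plus normalization argument above provides.
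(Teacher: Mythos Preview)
Your proof is correct and follows essentially the same route as the paper: split $\mathbf{N}$ into a finite initial segment and its cofinite tail, use that the initial segment is $P$-null, and then bound the tail integral using $|f(n)-A|<\varepsilon$. The only difference is cosmetic---you make the null-set argument for finite sets explicit (via the singleton-measure trick), whereas the paper silently drops the integral over $[0,n(\varepsilon))$; and you use a single modulus estimate where the paper writes out the two-sided sandwich $A-\varepsilon<\int f\,dP<A+\varepsilon$.
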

\begin{proof} Let $\epsilon>0$. Then there is $n(\epsilon)>0$ such that $|f(n)-A|<\epsilon$ for $n>n(\epsilon)$.
\begin{equation*}
\int_{\mathbf{N}}f(n)dP(n)=\int_{[0,n(\epsilon)[ }f(n)dP(n)+\int_{\mathbf{N}\setminus [0,n(\epsilon)[}f(n)dP(n)=
\end{equation*}
\begin{equation}
\int_{\mathbf{N}\setminus [0,n(\epsilon)[}f(n)dP(n).   \label{ccs}
\end{equation}
Since $A-\epsilon< f(n)< A-\epsilon$ for $n>n(\epsilon)$, we get that
\begin{equation*}
A-\epsilon =  \int_{\mathbf{N}\setminus [0,n(\epsilon)[}(A-\epsilon)dP(n) <  \int_{\mathbf{N}\setminus [0,n(\epsilon)[}f(n)dP(n)<
\end{equation*}
\begin{equation}
  \int_{\mathbf{N}\setminus [0,n(\epsilon)[}(A+\epsilon)dP(n)=A+\epsilon.   \label{ccs}
\end{equation}
By formula (2.16)  we get
\begin{equation}
A-\epsilon <  \int_{\mathbf{N}}f(n)dP(n) < A+\epsilon. \label{ccs}
\end{equation}

Since $\epsilon$ was taken arbitrary, Lemma 2.5 is proved.

\end{proof}

\begin{lem} Let  $P$ be a shift-invariant measure defined on the powerset of $\mathbf{N}$.  Then for all $p \in  [0, \infty)$ and $r \in  [ 1,\infty)$
a set $K= p + r N$ has the density $d(K)$ and the following equality
$d(K)=P(K)$ holds true.
\end{lem}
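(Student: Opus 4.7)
The plan is to show both that the density $d(K)$ exists with $d(K)=1/r$ and that $P(K)=1/r$, so that the two quantities coincide; neither piece requires anything deep beyond Lemma 2.4 and an elementary counting argument.

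First I would handle the asymptotic density. Writing $K=\{p+rk:k\in\mathbf{N}\}$, the elements of $K$ lying in $[1,n]$ are precisely those integers $k\in\mathbf{N}$ with $1\le p+rk\le n$, i.e. with $(1-p)/r\le k\le (n-p)/r$. Hence
\begin{equation*}
\#([1,n]\cap K)=\Bigl\lfloor\frac{n-p}{r}\Bigr\rfloor-\Bigl\lceil\frac{1-p}{r}\Bigr\rceil+1,
\end{equation*}
which differs from $n/r$ by a quantity bounded in absolute value by a constant depending only on $p$ and $r$. Dividing by $n$ and letting $n\to\infty$ yields $d(K)=1/r$, so in particular the limit in Definition 2.2 exists.

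Second, I would invoke Lemma 2.4 directly to the set $K=p+r\mathbf{N}$, which gives $P(K)=r^{-1}$. Comparing with the density computed above, $d(K)=1/r=P(K)$, which is exactly the equality claimed. Since both steps are completely elementary, the only conceivable obstacle is a notational one — namely verifying that the conventions for $\mathbf{N}$ (whether it contains $0$) and for the ranges of $p$ and $r$ line up with those in Lemma 2.4, but this only changes finitely many terms and hence affects neither the density nor (by shift-invariance and finite additivity of $P$ on a powerset algebra) the measure. Thus the lemma follows immediately.
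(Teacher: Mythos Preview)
Your argument is correct: the density computation is the standard floor/ceiling count giving $d(K)=1/r$, and the measure value $P(K)=1/r$ is exactly the content of Lemma~2.4, so the two agree. The only remark is that the paper itself does not supply a proof for this lemma --- it is simply stated (immediately after the proof of Lemma~2.5) as an easy consequence of Lemma~2.4 and the definition of density --- so there is no alternative argument to compare yours against. Your proof fills this gap in precisely the way the paper implicitly intends.
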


In the sequel,  under  a probability space  $(\Omega,{\mathcal{F}},P)$~  we will assume a triplet  $({\mathbf{N}}, {\mathcal{P}}({\mathbf{N}}), P)$,
where ${\mathcal{P}}({\mathbf{N}})$ denotes the powerset of the set of all natural numbers $\mathbf{N}$ and  the measure $P$ comes from Lemma 2.3.

In modular arithmetic notations, define the random variable $\xi$ as follows:

\begin{equation}
\xi(\omega) = \begin{cases} \omega/2 &\text{if } \omega \equiv 0 \pmod{2}\\ 3\omega+1 & \text{if } \omega \equiv 1 \pmod{2} .\end{cases}\label{ccs}
\end{equation}

\section{On a formula for  relative frequencies of even and odd numbers in the sequence of $n$-th coordinates of all Collatz sequences }

By virtue the results of Lemmas 2.3-2.4, we get the validity of the following assertion.

\begin{lem}The following equalities hold true:
\begin{equation}
p_{00}:=P(\{ \omega :\xi(\omega)\equiv 0 \pmod{2}| \omega\equiv 0 \pmod{2}\})=1/2,\label{ccs}
\end{equation}
\begin{equation}
p_{01}:=P(\{ \omega :\xi(\omega)\equiv 1 \pmod{2}| \omega\equiv 0 \pmod{2} \})=1/2,\label{ccs}
\end{equation}
\begin{equation}
p_{10}:=P(\{ \omega :\xi(\omega)\equiv 0 \pmod{2}| \omega\equiv 1 \pmod{2}\})=1,\label{ccs}
\end{equation}
\begin{equation}
p_{11}:=P(\{ \omega :\xi(\omega)\equiv 1 \pmod{2}| \omega  \equiv 1 \pmod{2}\})=0.\label{ccs}
\end{equation}
\end{lem}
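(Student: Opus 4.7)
The plan is to verify each of the four conditional probabilities separately, splitting them into two easy cases (the odd-conditioned ones) and two cases that require the density formula of Lemma~2.4.

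First I would handle $p_{10}$ and $p_{11}$. These are deterministic: if $\omega\equiv 1\pmod 2$, then $\xi(\omega)=3\omega+1$, and since $3\omega$ is odd, $3\omega+1$ is even. Hence the event $\{\xi(\omega)\equiv 0\pmod 2\}$ contains every odd $\omega$, while $\{\xi(\omega)\equiv 1\pmod 2\}$ intersects the odd numbers in the empty set. Therefore $p_{10}=1$ and $p_{11}=0$ with no computation of $P$-values beyond what the definition of conditional probability trivially supplies.

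Next I would handle $p_{00}$ and $p_{01}$. On the event $\{\omega\equiv 0\pmod 2\}$ one has $\xi(\omega)=\omega/2$. So $\xi(\omega)\equiv 0\pmod 2$ iff $\omega\in 4\mathbf{N}$, and $\xi(\omega)\equiv 1\pmod 2$ iff $\omega\in 2+4\mathbf{N}$. Applying Lemma~2.4 with the appropriate choices of $p$ and $r$ gives $P(2\mathbf{N})=1/2$, $P(4\mathbf{N})=1/4$, and $P(2+4\mathbf{N})=1/4$. Then by the definition of conditional probability,
\begin{equation*}
p_{00}=\frac{P(4\mathbf{N})}{P(2\mathbf{N})}=\frac{1/4}{1/2}=\frac{1}{2},\qquad p_{01}=\frac{P(2+4\mathbf{N})}{P(2\mathbf{N})}=\frac{1/4}{1/2}=\frac{1}{2},
\end{equation*}
which are the required values.

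There is no substantive obstacle; the only thing to be careful about is matching the arithmetic-progression parametrization in Lemma~2.4 (the sets $p+r\mathbf{N}$) with the residue classes that describe the events $\{\omega\equiv 0\pmod 2\}$, $\{\omega\in 4\mathbf{N}\}$, and $\{\omega\in 2+4\mathbf{N}\}$, and then assembling them via the elementary identity $P(A\mid B)=P(A\cap B)/P(B)$ using the fact that each of the numerator events is already a subset of the conditioning event.
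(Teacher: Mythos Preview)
Your proposal is correct and essentially matches the paper's proof: for $p_{00}$ and $p_{01}$ both identify the relevant residue classes $4\mathbf{N}$ and $4\mathbf{N}+2$, apply Lemma~2.4, and divide; for $p_{10}$ and $p_{11}$ the paper writes out the conditional-probability quotient explicitly (showing the numerator sets are $2\mathbf{N}+1$ and $\emptyset$ respectively), which amounts to the same parity observation you make directly.
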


\begin{proof} {\bf Proof of the equality (3.1)}
\begin{equation*}
p_{00}:=P(\{ \omega :\xi(\omega)\equiv 0 \pmod{2}| \omega\equiv 0 \pmod{2}\})=
\end{equation*}
\begin{equation*}
P(\{ \omega : \xi(\omega)\equiv 0 \pmod{2}\} \cap \{ \omega : \omega  \equiv 0 \pmod{2}\})/ P( \{ \omega : \omega  \equiv 0 \pmod{2}\})=
\end{equation*}
\begin{equation}
P((4N\cup(2\mathbf{N}+1))\cap 2\mathbf{N})/P(2\mathbf{N})= P(4\mathbf{N})/ P(2\mathbf{N})=1/4:1/2=1/2. \label{ccs}
\end{equation}
{\bf Proof of the equality (3.2).}
\begin{equation*}
p_{01}:=P(\{ \omega :\xi(\omega)\equiv 1 \pmod{2}| \omega\equiv 0 \pmod{2} \})=
\end{equation*}
\begin{equation*}
P( \{ \omega :\xi(\omega)\equiv 1 \pmod{2} \} \cap \{ \omega :  \omega\equiv 0 \pmod{2}\}) /  P( \{ \omega : \omega\equiv 0 \pmod{2}\})=
\end{equation*}
\begin{equation}
P(4\mathbf{N} +2)/ P(2\mathbf{N}) =1/4 : 1/2 =1/2.  \label{ccs}
\end{equation}
{\bf Proof of the equality (3.3.)}
\begin{equation*}
p_{10}:=P(\{ \omega :\xi(\omega)\equiv 0 \pmod{2}| \omega\equiv 1 \pmod{2}\})=
\end{equation*}
\begin{equation*}
P(\{ \omega :\xi(\omega)\equiv 0 \pmod{2}\} \cap \{ \omega : \omega \equiv 1 \pmod{2}\})/ P(\{ \omega : \omega \equiv 1 \pmod{2}\})
\end{equation*}
\begin{equation}
P((2N+1 \cup  4\mathbf{N} ) \cap 2\mathbf{N}+1 ) / P(2\mathbf{N}+1)=P(2\mathbf{N}+1) / P(2\mathbf{N}+1)= 1.    \label{ccs}
\end{equation}
{\bf Proof of the equality (3.4.)}
\begin{equation*}
P(\{ \omega :\xi(\omega)\equiv 1 \pmod{2}| \omega  \equiv 1 \pmod{2}\})=
\end{equation*}
\begin{equation*}
P(\{ \omega :\xi(\omega)\equiv 1 \pmod{2}\} \cap \{ \omega : \omega  \equiv 1 \pmod{2}\}) / P(\{ \omega : \omega  \equiv 1 \pmod{2}\})     =
\end{equation*}
\begin{equation}
P(4N \cap (2\mathbf{N}+1))/P(2\mathbf{N}+1)= P( \emptyset )/P(2\mathbf{N}+1)=0.\label{ccs}
\end{equation}

\end{proof}

Let $\xi_n(\omega)$ be  $\xi$  applied to $\omega$  recursively $n$ times;  Clearly, the chain of
logical transitions
\begin{equation}
\xi_0(\omega) \to \xi_1(\omega) \to \cdots ~~(\omega \in \Omega)\label{ccs}
\end{equation}
depends on the chance factor, where $\xi_0(\omega)=\omega$ and  $\xi_1=\xi$ for each $\omega \in \Omega$.

The sequence $(\xi_{n}(\omega))_{n \in {\mathbf{N}}}$ is called a Collatz sequence starting at point $\omega$.

We say that a system $\xi_n(\omega)$ is in a position $\epsilon_0$ (and write $\xi_n(\omega) \in \epsilon_0$) if $\xi_n(\omega)\equiv 0 \pmod{2}$
and $\xi_n(\omega)$ is in a position $\epsilon_1$ (and write $\xi_n(\omega) \in \epsilon_1$) if  $\xi_n(\omega)\equiv 1\pmod{2}$. Hence we have
a system, which after each step
changes its  position and a set of possible
positions are $\{\epsilon_0,\epsilon_1 \}$.

By the method of mathematical induction one can get the validity of the following assertion.

\begin{lem} For each natural number $n$, a set $\{ \omega \in \Omega : \xi_n(\omega) \in \epsilon_0\}$ can be presented  as  a finite union of disjoint sets of the form
$p + r \mathbf{N}$ where $p \in  [0, \infty)$ and $r \in  [ 1,\infty)$.
\end{lem}

If after
~$n$ steps the system is in position ~$\epsilon_i$~, then,
independently of its early positions it will pass to position
$\epsilon_j$ with probability $p_{ij}$, i.e.,
\begin{equation}
p_{ij}=P(\{\omega:\xi_{n+1}(\omega) \in  \epsilon_j~|\xi_n(\omega) \in  \epsilon_i\}),\label{ccs}
i,j =0,1.
\end{equation}

The above described model is just  Markov  homogeneous chain  with the matrix of transition probabilities
\begin{equation}
{{\mathcal{P}}}=\left(
\begin{array}{cl}
1/2 & 1/2 \\
1& 0
\end{array}\right )\label{ccs}
\end{equation}

and   with the vector of initial distributions

\begin{equation}
(P_0^{(0)},P_1^{(0)}) =(1/2,1/2).\label{ccs}
\end{equation}

Setting
\begin{equation}
(P_0^{(n)}, P_1^{(n)})= ( P(\{\omega:\xi_n(\omega) \in \epsilon_0\}), P(\{\omega:\xi_n(\omega) \in \epsilon_1\})),\label{ccs}
\end{equation}

by Lemma 2.1  we get

\begin{equation}
\lim_{n \to \infty}(P_0^{(n)}, P_1^{(n)})=(1/2,1/2) \times \left (
\begin{array}{cl}
\frac{1}{2} & \frac{1}{2} \\
1& 0
\end{array}\right )^{n}. \label{ccs}
\end{equation}

For each natural number $n \in \mathbf{N}$, let denote by $\nu^{(n)}(\epsilon_0)$ and $\nu^{(n)}(\epsilon_1)$  relative frequencies  of even and odd numbers within the sequence $(\xi_n(k))_{ k  \in {\mathbf{N}}}$ of $n$-th coordinates of all Collatz sequences, respectively, i.e.,

\begin{equation}
  \nu^{(n)}(\epsilon_0)=\lim_{m \to \infty} \#\{ k: 1 \le k \le m~\&~\xi_n(k)\in \epsilon_0\}/m \label{ccs}
 \end{equation}
and
\begin{equation}
  \nu^{(n)}(\epsilon_1)=\lim_{m \to \infty} \#\{ k: 1 \le k \le m~\&~\xi_n(k)\in \epsilon_1\}/m ). \label{ccs}
 \end{equation}

\begin{thm} For each natural number $n \in \mathbf{N}$, the  relative frequencies  $\nu^{(n)}(\epsilon_0)$ and $\nu^{(n)}(\epsilon_1)$ are computed by the following formula
 \begin{equation}
  \nu^{(n)}(\epsilon_0)= \frac{2}{3}+(-1)^{n+1}\times \frac{1}{3 \times 2^{n+1}} \label{ccs}
\end{equation}
and
\begin{equation}
  \nu^{(n)}(\epsilon_1)= \frac{1}{3}+(-1)^{n}\times \frac{1}{3 \times 2^{n+1}}. \label{ccs}
\end{equation}
\end{thm}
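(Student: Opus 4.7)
The plan is to identify $\nu^{(n)}(\epsilon_0)$ with $P_0^{(n)}$ and then compute the latter from the Markov-chain formalism already set up in Section~2. The frequency-to-probability identification is the genuine content; once it is established, everything else reduces to an explicit matrix power that the paper has essentially already done inside the proof of Lemma~2.2.

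First I would observe that, by definition,
\begin{equation*}
\nu^{(n)}(\epsilon_0) = d(E_n), \qquad E_n := \{k \in \mathbf{N} : \xi_n(k) \in \epsilon_0\}.
\end{equation*}
Lemma~3.2 tells us that $E_n$ is a finite disjoint union $\bigsqcup_{i} (p_i + r_i \mathbf{N})$ of arithmetic progressions. By Lemma~2.6 each such progression has asymptotic density equal to its $P$-measure, and since both $d$ and $P$ are finitely additive on disjoint unions of this form, we get
\begin{equation*}
\nu^{(n)}(\epsilon_0) \;=\; \sum_i d(p_i + r_i\mathbf{N}) \;=\; \sum_i P(p_i + r_i\mathbf{N}) \;=\; P(E_n) \;=\; P_0^{(n)}.
\end{equation*}
The same argument with $\epsilon_1$ in place of $\epsilon_0$ gives $\nu^{(n)}(\epsilon_1) = P_1^{(n)}$.

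Next I would apply Lemma~2.1 with the transition matrix~(3.10) and initial vector~(3.11), obtaining $(P_0^{(n)}, P_1^{(n)}) = (1/2,\,1/2)\,\mathcal{P}^n$. Specializing the closed form for $\mathcal{P}^n$ derived inside the proof of Lemma~2.2 to $p_{00}=1/2$, $p_{11}=0$ (so that $2-p_{00}-p_{11}=3/2$ and $p_{00}+p_{11}-1 = -1/2$) yields
\begin{equation*}
\mathcal{P}^n \;=\; \begin{pmatrix} 2/3 & 1/3 \\ 2/3 & 1/3 \end{pmatrix} \;+\; \frac{(-1)^n}{3\cdot 2^{n-1}} \begin{pmatrix} 1/2 & -1/2 \\ -1 & 1 \end{pmatrix}.
\end{equation*}
Left-multiplying by $(1/2,1/2)$ then produces the two claimed identities $\nu^{(n)}(\epsilon_0) = 2/3 + (-1)^{n+1}/(3\cdot 2^{n+1})$ and $\nu^{(n)}(\epsilon_1) = 1/3 + (-1)^{n}/(3\cdot 2^{n+1})$ by direct arithmetic.

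The only step requiring real care is the first: identifying the density of $E_n$ with the Markov-chain probability $P_0^{(n)}$. This rests on the structural input from Lemma~3.2 together with the density/measure coincidence of Lemma~2.6, and on finite additivity of both $d$ (valid on the algebra generated by APs) and $P$. After that, the result is just a specialization of the algebra performed once already in Section~2, so no further obstacle remains.
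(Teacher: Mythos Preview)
Your proposal is correct and follows essentially the same approach as the paper: identify $\nu^{(n)}(\epsilon_i)$ with $P_i^{(n)}$ via Lemma~3.2 and Lemma~2.6, then compute $(1/2,1/2)\,\mathcal{P}^n$ using the closed form for $\mathcal{P}^n$ obtained in the proof of Lemma~2.2. Your write-up is in fact slightly more explicit than the paper's about the finite-additivity step linking density and Banach measure on finite unions of arithmetic progressions, but the argument is the same.
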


\begin{proof}

By using the approach used in the proof of the Lemma 2.2, we get

\begin{equation*}
\left (
\begin{array}{cl}
\frac{1}{2} & \frac{1}{2} \\
1& 0
\end{array}\right )^{n}= \frac{2}{3} \left(
\begin{array}{cl}
1&1/2 \\
1& 1/2
\end{array}\right )+\frac{(-1)^n}{3 \times 2^{n-1}} \left(
\begin{array}{cl}
1/2&-1/2) \\
-1 & 1
\end{array} \right)=
\end{equation*}

\begin{equation}
\left (
\begin{array}{cl}
\frac{2}{3}+\frac{(-1)^n}{3\times 2^{n}} & \frac{1}{3}+\frac{(-1)^{n+1}}{3\times 2^{n}} \\
\frac{2}{3}+\frac{(-1)^{n+1}}{3\times 2^{n-1}}& \frac{1}{3}+\frac{(-1)^{n}}{3\times 2^{n-1}}
\end{array}\right ).\label{ccs}
\end{equation}

On the one hand, for each natural number $n \in \mathbf{N}$, the relative frequency  of even  numbers within the sequence $(\xi_n(k))_{ k  \in {\mathbf{N}}}$ of $n$-th coordinates of all Collatz sequences coincides with  the relative frequency  of the pre-image $\xi_n^{-1}(2\mathbf{N})$ within $\Omega=N$. By Lemma 2.7, the pre-image $\xi_n^{-1}(2\mathbf{N})$ is  presented as  a finite union of disjoint sets of the form $p + r \mathbf{N}$ where $p \in  [0, \infty)$ and $r \in  [ 1,\infty)$. By Lemma 2.6,  such sets have  relative frequencies(equivalently, density) which coincide with their  Banach measure $P$. Take into account this fact and the formula (2.6), we get

\begin{equation*}
 (\nu^{(n)}(\epsilon_0), \nu^{(n)}(\epsilon_1)=( P(\{\omega:\xi_n(\omega) \in \epsilon_0\}), P(\{\omega:\xi_n(\omega) \in \epsilon_1\}))=
 \end{equation*}
 \begin{equation*}
 (P_0^{(n)}, P_1^{(n)})=(1/2,1/2) \times \left (
\begin{array}{cl}
\frac{1}{2} & \frac{1}{2} \\
1& 0
\end{array}\right )^{n}=
\end{equation*}

\begin{equation*} (1/2,1/2) \times \left (
\begin{array}{cl}
\frac{2}{3}+\frac{(-1)^n}{3\times 2^{n}} & \frac{1}{3}+\frac{(-1)^{n+1}}{3\times 2^{n}} \\
\frac{2}{3}+\frac{(-1)^{n+1}}{3\times 2^{n-1}}& \frac{1}{3}+\frac{(-1)^{n}}{3\times 2^{n-1}}
\end{array}\right )=
\end{equation*}

\begin{equation}
\big( \frac{2}{3}+\frac{(-1)^{n+1}}{3\times 2^{n+1}}, \frac{1}{3}+\frac{(-1)^n}{3\times 2^{n+1}} \big ).\label{ccs}
\end{equation}

This ends the proof of Theorem 3.1.
\end{proof}

\begin{cor} The following equalities
 \begin{equation}
  \lim_{n \to \infty} \nu^{(n)}(\epsilon_0)= 2/3\label{ccs}
 \end{equation}
 and
 \begin{equation}
  \lim_{n \to \infty}\nu^{(n)}(\epsilon_1)= 1/3.\label{ccs}
 \end{equation}
hold true.
\end{cor}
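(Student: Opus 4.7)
The plan is to derive the corollary directly from Theorem 3.1 by passing to the limit in the explicit formulas for $\nu^{(n)}(\epsilon_0)$ and $\nu^{(n)}(\epsilon_1)$. Since Theorem 3.1 expresses each frequency as a constant plus a term of order $2^{-(n+1)}$, the conclusion is immediate once one observes that the oscillating correction vanishes.

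More precisely, I would first write
\begin{equation*}
\nu^{(n)}(\epsilon_0) - \frac{2}{3} = (-1)^{n+1}\cdot \frac{1}{3\times 2^{n+1}},\qquad \nu^{(n)}(\epsilon_1) - \frac{1}{3} = (-1)^{n}\cdot \frac{1}{3\times 2^{n+1}}.
\end{equation*}
Taking absolute values gives the uniform bound
\begin{equation*}
\Bigl|\nu^{(n)}(\epsilon_0) - \tfrac{2}{3}\Bigr| = \Bigl|\nu^{(n)}(\epsilon_1) - \tfrac{1}{3}\Bigr| = \frac{1}{3\times 2^{n+1}},
\end{equation*}
which tends to $0$ as $n \to \infty$. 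This establishes both limits simultaneously.

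As an independent sanity check I would also verify the result via Lemma 2.2 applied to the transition matrix $\mathcal{P}$ displayed in Section 3, where $p_{00}=1/2$, $p_{11}=0$, so $|p_{00}+p_{11}-1|=1/2<1$. The formula of Lemma 2.2 then yields the stationary vector $\bigl(\tfrac{1-p_{11}}{2-p_{00}-p_{11}},\tfrac{1-p_{00}}{2-p_{00}-p_{11}}\bigr) = \bigl(\tfrac{2}{3},\tfrac{1}{3}\bigr)$, in perfect agreement with the limits above. There is essentially no obstacle here: the only point requiring any care is remembering that, by the argument in the proof of Theorem 3.1, $\nu^{(n)}(\epsilon_i)$ equals $P_i^{(n)}$ (since $\xi_n^{-1}(2\mathbf{N})$ is a finite union of arithmetic progressions whose Banach measure coincides with the asymptotic density, by Lemmas 2.6 and 2.7), so that taking the limit of $\nu^{(n)}(\epsilon_i)$ is the same as taking the limit of the Markov chain distribution.
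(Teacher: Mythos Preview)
Your proposal is correct and matches the paper's intent: the corollary is stated without proof as an immediate consequence of Theorem~3.1, and your argument---letting $n\to\infty$ in the explicit formulas so that the term $\frac{1}{3\times 2^{n+1}}$ vanishes---is precisely the one-line justification the paper leaves implicit. The additional sanity check via Lemma~2.2 is a nice redundancy but not needed.
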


\section{On a formula for a relative frequency of numbers of the type $3m+1$ in the sequence of $n$-th coordinates of all Collatz sequences }

\begin{lem}The following equalities hold true:
\begin{equation}
p_{00}:=P(\{ \omega :\xi(\omega)= 1 \pmod{3}| \omega = 1 \pmod{3}\})=1/2,\label{ccs}
\end{equation}
\begin{equation}
p_{01}:=P(\{ \omega :\xi(\omega) \neq 1 \pmod{3}| \omega = 1 \pmod{3} \})=1/2,\label{ccs}
\end{equation}
\begin{equation}
p_{10}:=P(\{ \omega :\xi(\omega)= 1 \pmod{3}| \omega \neq 1 \pmod{3}\})=3/4,\label{ccs}
\end{equation}
\begin{equation}
p_{11}:=P(\{ \omega :\xi(\omega) \neq 1 \pmod{3}| \omega  \neq 1 \pmod{3}\})=1/4.\label{ccs}
\end{equation}
\end{lem}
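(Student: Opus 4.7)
The plan is to mirror the computation of Lemma~3.1, but with the state space refined so that the relevant congruence class is $1 \pmod{3}$ instead of $0 \pmod{2}$. The key observation is that the definition of $\xi$ splits according to parity, while the state here is defined modulo $3$, so the natural common refinement is to decompose $\mathbf{N}$ into its six residue classes modulo $6 = \mathrm{lcm}(2,3)$. On each such class both the parity of $\omega$ and the residue $\xi(\omega)\bmod 3$ are constant, so the joint events appearing in the conditional probabilities become finite disjoint unions of sets of the form $p + 6\mathbf{N}$.

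First I would record, via Lemma~2.4, the ``base'' values
\[
P(1+3\mathbf{N})=\tfrac{1}{3},\qquad P(\mathbf{N}\setminus(1+3\mathbf{N}))=\tfrac{2}{3},\qquad P(j+6\mathbf{N})=\tfrac{1}{6}\ \ (j=0,1,\dots,5).
\]
Next I would tabulate, by direct substitution in the definition of $\xi$, the value of $\xi(\omega)\bmod 3$ on each residue class $j+6\mathbf{N}$: an easy check gives $\xi(\omega)\equiv 1\pmod 3$ precisely when $\omega$ is odd (because then $\xi(\omega)=3\omega+1\equiv 1\pmod 3$) or $\omega\equiv 2\pmod 6$ (because then $\xi(\omega)=\omega/2$ is $\equiv 1\pmod 3$), while on $\omega\equiv 0\pmod 6$ and $\omega\equiv 4\pmod 6$ we get $\xi(\omega)\equiv 0$ and $\xi(\omega)\equiv 2$ respectively.

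Using this table I would compute each numerator of the conditional probabilities as a sum of Banach measures of classes mod~$6$. For instance, to obtain $p_{00}$, the set $\{\omega\equiv 1\pmod 3\}=(1+6\mathbf{N})\cup(4+6\mathbf{N})$ splits into an odd part $1+6\mathbf{N}$ (on which $\xi(\omega)\equiv 1\pmod 3$) and an even part $4+6\mathbf{N}$ (on which $\xi(\omega)\equiv 2\pmod 3$), so the joint event has measure $\tfrac{1}{6}$, giving $p_{00}=\tfrac{1/6}{1/3}=\tfrac{1}{2}$. For $p_{10}$, the complementary class $\{\omega\not\equiv 1\pmod 3\}=(0+6\mathbf{N})\cup(2+6\mathbf{N})\cup(3+6\mathbf{N})\cup(5+6\mathbf{N})$ contributes $\xi(\omega)\equiv 1\pmod 3$ on exactly the three parts $2+6\mathbf{N}$, $3+6\mathbf{N}$, $5+6\mathbf{N}$, so the joint measure is $3\cdot\tfrac{1}{6}=\tfrac{1}{2}$ and $p_{10}=\tfrac{1/2}{2/3}=\tfrac{3}{4}$. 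The remaining two equalities $p_{01}=1-p_{00}$ and $p_{11}=1-p_{10}$ are then automatic, but could equally be verified by the same case analysis.

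There is no genuine obstacle here; the argument is a bookkeeping exercise in residues, and the only place where care is needed is in not conflating $\xi(\omega)\bmod 3$ with $\omega\bmod 3$ on the even part, where $\xi(\omega)=\omega/2$ induces a nontrivial bijection $j\bmod 6\mapsto (j/2)\bmod 3$. Refining to mod $6$ from the outset (rather than trying to work directly mod $3$) side-steps that difficulty and lets every set appearing in the computation be written explicitly as a finite union of arithmetic progressions, exactly the form covered by Lemma~2.4.
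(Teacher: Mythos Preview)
Your proposal is correct and follows essentially the same route as the paper: both arguments refine to residue classes modulo $6$, determine on each class whether $\xi(\omega)\equiv 1\pmod 3$, and then evaluate the resulting conditional probabilities via Lemma~2.4. The paper organizes the bookkeeping by first writing out the forward images $\xi(3\mathbf{N}+j)$ for $j=0,1,2$ (splitting each by parity) and then reading off the preimage of $1+3\mathbf{N}$, whereas you tabulate $\xi(\omega)\bmod 3$ directly on the six classes; this is a purely cosmetic difference and the computations match.
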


\begin{proof}
We have:
\begin{equation*}
\xi(3N)=\xi(3((2N+1) \cup 2N)))=\xi(3((2N+1)) \cup 3\times 2N) =
\end{equation*}
\begin{equation}
\xi(6N+3) \cup \xi(2 \times 3N)= (3(6N+3)+1) \cup 3N;  \label{ccs}
\end{equation}
\begin{equation*}
\xi(3N+1)=\xi(3((2N+1) \cup 2N))+1 )=\xi(3((2N+1))+1 \cup 3\times 2N+1) =
\end{equation*}
\begin{equation}\xi(6N+4) \cup \xi(6N+1)=(3N+2) \cup (3(6N+1)+1); \label{ccs}
\end{equation}
\begin{equation*}
\xi(3N+2)=\xi(3((2N+1) \cup 2N))+2 )=\xi(3((2N+1))+2 \cup 3\times 2N+2) =
\end{equation*}
\begin{equation}\xi(6N+5) \cup \xi(6N+2)=(3(6N+2)+1)\cup (3N+1). \label{ccs}
\end{equation}
By using formulas (4.5)-(4.7) and Lemma 2.4, we get:

\begin{equation*}
p_{00}:=P(\{ \omega :\xi(\omega)= 1\pmod{3}\big|\omega = 1\pmod{3}\})=
\end{equation*}
\begin{equation*}
P(\{\omega :\xi(\omega)=1\pmod{3}~\&~\omega = 1\pmod{3}\})/P(\{ \omega : \omega = 1\pmod{3}\})=
\end{equation*}
\begin{equation*}
P(((6N+3) \cup (6N+1) \cup (3N+2) )\cap (3N+1) )/P(3N+1)=
\end{equation*}
\begin{equation}P((6N+1)\cap (3N+1) )/P(3N+1)=1/6:1/3=1/2;\label{ccs}
\end{equation}
\begin{equation*}
p_{11}:=P(\{ \omega :\xi(\omega)\neq 1\pmod{3}\big| \omega \neq 1 \pmod{3}\})=
\end{equation*}
\begin{equation*}
 P(\{ \omega :\xi(\omega)\neq 1 \pmod{3}~\&~ \omega \neq 1 \pmod{3}\})/ P(\{ \omega :  \omega \neq 1 \pmod{3}\})=
 \end{equation*}
\begin{equation*} P((6N \cup (6N+4)) \cap (3N \cup 3N+2))/ P((3N \cup (3N+2))=
\end{equation*}
\begin{equation}P(6N)/ P(3N \cup (3N+2))=1/6:2/3=1/4.
\label{ccs}
\end{equation}
It is clear that $p_{01}=1-p_{00}=1/2$ and $p_{10}=1-p_{11}=3/4$.

\end{proof}

We say that a Collatz sequence $\xi_n(\omega)$ starting at point $\omega$ is in a position $\epsilon_0$ (and write $\xi_n(\omega) \in \epsilon_0$) if $\xi_n(\omega)\equiv 1 \pmod{3}$
and $\xi_n(\omega)$ is in a position $\epsilon_1$, otherwise.

By the method of mathematical induction one can get the validity of the following assertion.

\begin{lem} For each natural number $n$, a set $\{ \omega \in \Omega : \xi_n(\omega) \in \epsilon_0\}$ can be presented  as  a finite union of disjoint sets of the form
$p + r \mathbf{N}$ where $p \in  [0, \infty)$ and $r \in  [ 1,\infty)$.
\end{lem}

If after
~$n$ steps the system is in position ~$\epsilon_i$~, then,
independently of its early positions it will pass to position
$\epsilon_j$ with probability $p_{ij}$, i.e.,
\begin{equation}
p_{ij}=P(\{\omega:\xi_{n+1}(\omega) \in  \epsilon_j~|\xi_n(\omega) \in  \epsilon_i\}),\label{ccs}
i,j =0,1.
\end{equation}

The above described model is just  Markov  homogeneous chain  with the matrix of transition probabilities
\begin{equation}
{{\mathcal{P}}}=\left(
\begin{array}{cl}
1/2 & 1/2 \\
3/4& 1/4
\end{array}\right )\label{ccs}
\end{equation}

and   with the vector of initial distributions

\begin{equation}
(P_0^{(0)},P_1^{(0)}) =(1/3,2/3).\label{ccs}
\end{equation}

Setting
\begin{equation}
(P_0^{(n)}, P_1^{(n)})= ( P(\{\omega:\xi_n(\omega) \in \epsilon_0\}), P(\{\omega:\xi_n(\omega) \in \epsilon_1\})),\label{ccs}
\end{equation}

by Lemma 2.1  we get

\begin{equation}
(P_0^{(n)}, P_1^{(n)})=(1/3,2/3) \times \left (
\begin{array}{cl}
\frac{1}{2} & \frac{1}{2} \\
\frac{3}{4} & \frac{1}{4}
\end{array}\right )^{n}. \label{ccs}
\end{equation}

For each natural number $n \in \mathbf{N}$, let define $\nu^{(n)}(\epsilon_0)$ and $\nu^{(n)}(\epsilon_1)$ by the formulas

\begin{equation}
  \nu^{(n)}(\epsilon_0)=\lim_{m \to \infty} \#\{ k: 1 \le k \le m~\&~\xi_n(k) \in \epsilon_0\}/m \label{ccs}
 \end{equation}
and
\begin{equation}
  \nu^{(n)}(\epsilon_1)=\lim_{m \to \infty} \#\{ k: 1 \le k \le m~\&~\xi_n(k)\in \epsilon_1\}/m ). \label{ccs}
 \end{equation}

\begin{thm} For each natural number $n \in \mathbf{N}$, the   following formulas
 \begin{equation}
  \nu^{(n)}(\epsilon_0)= \frac{3}{5}+ \frac{(-1)^{n+1}}{15 \times 2^{2(n-1)}}\label{ccs}
\end{equation}
and
\begin{equation}
  \nu^{(n)}(\epsilon_1)= \frac{2}{5}+ \frac{(-1)^{n}}{15 \times 2^{2(n-1)}} \label{ccs}
\end{equation}
hold true.

\end{thm}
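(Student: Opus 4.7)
The plan is to mirror, step by step, the proof of Theorem 3.1, substituting the mod-$3$ data (Lemmas 4.1--4.2, matrix (4.11), initial distribution (4.12)) for the mod-$2$ data used there. The whole argument has three essentially independent ingredients and I would assemble them in the same order.

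First, I would invoke Lemma 4.2 to write $\{\omega:\xi_n(\omega)\in\epsilon_0\}$ as a finite disjoint union of arithmetic progressions $p+r\mathbf{N}$. Lemma 2.6 then tells me that the asymptotic density of each such progression exists and coincides with its Banach measure. Combining the two and adding across the disjoint pieces, the relative frequencies $\nu^{(n)}(\epsilon_0)$ and $\nu^{(n)}(\epsilon_1)$ defined by (4.14)--(4.15) agree with the Markov-chain marginals $P_0^{(n)}$ and $P_1^{(n)}$ from (4.13). This is exactly the reduction used in the last paragraph of the proof of Theorem 3.1.

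Second, by Lemma 2.1, everything now reduces to computing the $n$-th power of the transition matrix $\mathcal{P}$ in (4.11), since $(P_0^{(n)},P_1^{(n)})=(1/3,2/3)\cdot \mathcal{P}^n$. For the present $\mathcal{P}$ one has $p_{00}+p_{11}-1=-1/4\in(-1,1)$ and $2-p_{00}-p_{11}=5/4$, so the induction carried out inside the proof of Lemma 2.2 applies verbatim, with the new constants, and produces the closed form
\[
\mathcal{P}^n=\begin{pmatrix} 3/5 & 2/5 \\ 3/5 & 2/5 \end{pmatrix}+\frac{(-1)^n}{5\cdot 2^{2(n-1)}}\begin{pmatrix} 1/2 & -1/2 \\ -3/4 & 3/4 \end{pmatrix}.
\]
I would verify this either by rerunning the induction or simply by substituting $p_{00}=1/2$, $p_{11}=1/4$ into the formula displayed in the proof of Lemma 2.2, and I would sanity-check it at $n=1$.

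Third, I would multiply the row vector $(1/3,2/3)$ through. The stationary block contributes $(3/5,2/5)$; the oscillating block contributes $(1/3)(1/2)+(2/3)(-3/4)=-1/3$ in the first coordinate and its negative $+1/3$ in the second. Together with the factor $(-1)^n/(5\cdot 2^{2(n-1)})$, this gives precisely (4.16) and (4.17). The main obstacle is nothing conceptual but purely computational: keeping track of the sign and the power $2^{2(n-1)}$ coming from $(-1/4)^n$ through the vector-matrix product. No new Markov-chain argument beyond what Section 2 and Lemma 4.2 already supply is required.
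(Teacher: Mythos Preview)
Your proposal is correct and follows essentially the same route as the paper: invoke Lemma~4.2 and Lemma~2.6 to identify $(\nu^{(n)}(\epsilon_0),\nu^{(n)}(\epsilon_1))$ with $(P_0^{(n)},P_1^{(n)})$, compute $\mathcal{P}^n$ via the closed form established in the proof of Lemma~2.2, and multiply through by the initial distribution $(1/3,2/3)$. The only difference is the order of presentation (you do the reduction first, the paper computes $\mathcal{P}^n$ first), but the ingredients and calculations are identical.
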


\begin{proof}

By using the approach used in the proof of Lemma 2.2, we get

\begin{equation*}
\left (
\begin{array}{cl}
\frac{1}{2} & \frac{1}{2} \\
3/4& 1/4
\end{array}\right )^{n}= \frac{4}{5} \left(
\begin{array}{cl}
3/4&1/2 \\
3/4& 1/2
\end{array}\right )+\frac{(-1)^n}{5 \times 2^{2n-2}} \left(
\begin{array}{cl}
1/2&-1/2) \\
-3/4 & 3/4
\end{array} \right)=
\end{equation*}

\begin{equation*}
 \left(
\begin{array}{cl}
3/5&2/5 \\
3/5& 2/5
\end{array}\right )+ \left(
\begin{array}{cl}
\frac{(-1)^n}{5 \times 2^{2n-1}}&\frac{(-1)^{n+1}}{5 \times 2^{2n-1}}) \\
\frac{3(-1)^{n+1}}{5 \times 2^{2n}} & \frac{3(-1)^n}{5 \times 2^{2n}}
\end{array} \right)=
\end{equation*}

\begin{equation*}
 \left(
\begin{array}{cl}
3/5+\frac{(-1)^n}{5 \times 2^{2n-1}}&2/5+\frac{(-1)^{n+1}}{5 \times 2^{2n-1}}\\
3/5+\frac{3(-1)^{n+1}}{5 \times 2^{2n}}& 2/5+\frac{3(-1)^n}{5 \times 2^{2n}}
\end{array}\right ).
\end{equation*}

Notice that for each natural number $n \in \mathbf{N}$, the relative frequency  of   numbers within the sequence $(\xi_n(k))_{ k  \in {\mathbf{N}}}$ of $n$-th coordinates of all Collatz sequences which are presented in the form $3m+1$ coincides with  the relative frequency  of the pre-image $\xi_n^{-1}(3\mathbf{N}+1)$ within $\Omega=N$. By Lemma 4.2, the pre-image $\xi_n^{-1}(3\mathbf{N}+1)$ is  presented as  a finite union of disjoint sets of the form $p + r \mathbf{N}$ where $p \in  [0, \infty)$ and $r \in  [ 1,\infty)$. By Lemma 2.5,  such sets have  relative frequencies which coincide with their  Banach measure $P$. Take into account this fact and the formula 2.31, we get

\begin{equation*}
 \big(\nu^{(n)}(\epsilon_0), \nu^{(n)}(\epsilon_1)\big)= \big( P(\{\omega:\xi_n(\omega) \in \epsilon_0\}), P(\{\omega:\xi_n(\omega) \in \epsilon_1\}) \big)=
 \end{equation*}
 \begin{equation*}
  \big(P_0^{(n)}, P_1^{(n)} \big)=(1/3,2/3) \times \left (
\begin{array}{cl}
\frac{1}{2} & \frac{1}{2} \\
3/4& 1/4
\end{array}\right )^{n}=
\end{equation*}

\begin{equation*} (1/3,2/3) \times \left(
\begin{array}{cl}
3/5+\frac{(-1)^n}{5 \times 2^{2n-1}}&2/5+\frac{(-1)^{n+1}}{5 \times 2^{2n-1}}\\
3/5+\frac{3(-1)^{n+1}}{5 \times 2^{2n}}& 2/5+\frac{3(-1)^n}{5 \times 2^{2n}}
\end{array}\right )=
\end{equation*}

\begin{equation}
\big( \frac{3}{5}+ \frac{(-1)^{n+1}}{15 \times 2^{2(n-1)}}, \frac{2}{5}+\frac{(-1)^n}{15\times 2^{2(n-1)}} \big ).\label{ccs}
\end{equation}

This ends the proof of Theorem 4.1.
\end{proof}

\begin{cor} The following equalities
 \begin{equation}
  \lim_{n \to \infty} \nu^{(n)}(\epsilon_0)= 3/5\label{ccs}
 \end{equation}
 and
 \begin{equation}
  \lim_{n \to \infty}\nu^{(n)}(\epsilon_1)= 2/5.\label{ccs}
 \end{equation}
hold true.
\end{cor}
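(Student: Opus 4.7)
The plan is to derive the corollary directly from Theorem 4.1 by letting $n \to \infty$ in the closed-form formulas established there. Theorem 4.1 asserts
\[
\nu^{(n)}(\epsilon_0) = \frac{3}{5} + \frac{(-1)^{n+1}}{15 \times 2^{2(n-1)}}, \qquad \nu^{(n)}(\epsilon_1) = \frac{2}{5} + \frac{(-1)^n}{15 \times 2^{2(n-1)}},
\]
so the only task remaining is to confirm that the two correction terms vanish. The key observation I would invoke is that each correction has magnitude $\frac{1}{15 \times 4^{n-1}}$, which is a geometric sequence with ratio $1/4 < 1$ and hence tends to $0$. Passing to the limit on both sides immediately produces the claimed values $3/5$ and $2/5$.

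There is no substantive obstacle here: the entire computational content is already packaged inside Theorem 4.1, and the corollary is a routine asymptotic reading of that formula. The only point that deserves a line of justification is the decay of the correction, which is geometric rather than merely tending to $0$; this is visible from the factor $2^{2(n-1)} = 4^{n-1}$ in the denominator, independent of the sign $(-1)^n$ in the numerator.

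As an independent consistency check before writing up the proof, I would apply Lemma 2.2 to the transition matrix
\[
\mathcal{P} = \begin{pmatrix} 1/2 & 1/2 \\ 3/4 & 1/4 \end{pmatrix},
\]
for which $|p_{00}+p_{11}-1| = 1/4 < 1$, so Lemma 2.2 applies and yields the stationary vector
\[
\left( \frac{1-p_{11}}{2-p_{00}-p_{11}}, \frac{1-p_{00}}{2-p_{00}-p_{11}} \right) = \left( \frac{3/4}{5/4}, \frac{1/2}{5/4} \right) = \left( \frac{3}{5}, \frac{2}{5} \right).
\]
This matches the claim and, by the argument in the proof of Lemma 2.2, is independent of the initial distribution $(1/3, 2/3)$ used in Section 4, confirming the corollary from a second viewpoint.
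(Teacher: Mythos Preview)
Your proposal is correct and follows essentially the same route as the paper: the corollary is stated immediately after Theorem 4.1 with no separate proof, so the intended argument is precisely the one you give, namely reading off the limit from the explicit formulas as the geometrically decaying correction terms vanish. Your additional consistency check via Lemma 2.2 is a nice touch but goes beyond what the paper provides.
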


\section{Main results}
Let denote by
 \begin{equation*}
\mathcal{G}=\{ f | f : \Omega \times \Omega \to {\bf R} ~\&~ \int_{\Omega}\big(\int_{\Omega}f(\omega_1,\omega_2)dP(\omega_2)\big)d P(\omega_1)=
\end{equation*}
 \begin{equation}
\int_{\Omega}\big(\int_{\Omega}f(\omega_1,\omega_2)dP(\omega_1)\big)d P(\omega_2)\}\label{ccs}
 \end{equation}

\begin{thm} Let define $g : \Omega \times \Omega \to {\bf R}$ by the following formula: $g(\omega_1,\omega_2)=1$ if $\xi_{\omega_1}(\omega_2)=0 \pmod{2}$ and $g(\omega_1,\omega_2)=0$, otherwise. Then under Collatz conjecture we have that  $g \in \mathcal{G}$.
\end{thm}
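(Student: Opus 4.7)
The plan is to compute both repeated integrals of $g$ explicitly and observe that each equals $2/3$, with the Collatz conjecture entering only one of the two computations.

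First I would fix $\omega_1 = n$ and evaluate $\int_\Omega g(n,\omega_2)\,dP(\omega_2)$, which is simply $P(\{\omega_2 : \xi_n(\omega_2) \equiv 0 \pmod 2\})$. By Theorem 3.1 this equals $\nu^{(n)}(\epsilon_0) = \tfrac{2}{3} + \tfrac{(-1)^{n+1}}{3\cdot 2^{n+1}}$. Since this tends to $2/3$ as $n\to\infty$, Lemma 2.5 immediately yields $\int_\Omega \nu^{(n)}(\epsilon_0)\,dP(n) = 2/3$, which handles the first repeated integral without any appeal to the conjecture.

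For the reverse order I would fix $\omega_2 \in \mathbf{N}$ and invoke the Collatz conjecture to produce some $N = N(\omega_2)$ with $\xi_N(\omega_2) = 1$. After step $N$ the orbit cycles $1 \to 4 \to 2 \to 1 \to \cdots$, so for $n \geq N$ one has $\xi_n(\omega_2) \equiv 0 \pmod 2$ exactly when $n \equiv N+1$ or $n \equiv N+2 \pmod 3$. Therefore
\begin{equation*}
\{n \in \mathbf{N} : \xi_n(\omega_2) \equiv 0 \pmod 2\} = F_{\omega_2} \cup \bigl((N+1) + 3\mathbf{N}\bigr) \cup \bigl((N+2) + 3\mathbf{N}\bigr),
\end{equation*}
where $F_{\omega_2}$ is finite. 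By Lemma 2.4 each of the two arithmetic progressions has Banach measure $1/3$, they are disjoint, so the $P$-measure of the union is $2/3$. Consequently the inner integral equals $2/3$ uniformly in $\omega_2$, and the outer integration preserves this constant. Both repeated integrals thus equal $2/3$, giving $g \in \mathcal{G}$.

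The main obstacle I anticipate is the handling of the transient tail $F_{\omega_2}$: one needs $P(F_{\omega_2}) = 0$. I would derive this by observing that $\{p\} \subseteq p + r\mathbf{N}$ for every $r \geq 1$, so Lemma 2.4 forces $P(\{p\}) \leq 1/r$ and hence $P(\{p\}) = 0$; finite additivity then disposes of $F_{\omega_2}$. A secondary subtlety is that $N(\omega_2)$ is not known to be bounded in $\omega_2$, but this is harmless because the inner integral evaluates to $2/3$ regardless of the value of $N$, so no uniform estimate on the length of the transient is required.
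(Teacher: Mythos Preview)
Your proof is correct and follows essentially the same approach as the paper: evaluate the first iterated integral via Theorem~3.1 and Lemma~2.5, and the second by invoking the Collatz conjecture so that each orbit eventually enters the cycle $1\to 4\to 2$. Your explicit decomposition of $\{n:\xi_n(\omega_2)\equiv 0\pmod 2\}$ into two arithmetic progressions and a finite set (and the verification that $P(F_{\omega_2})=0$) is in fact more careful than the paper, which simply asserts that the relative frequency of even terms in each Collatz sequence is $2/3$ and identifies this with the inner Banach integral.
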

\begin{proof}
By Lemma 2.5 and Theorem 3.1, we get
\begin{equation*}
\int_{\Omega}\big(\int_{\Omega}g(\omega_1,\omega_2)dP(\omega_2)\big)d P(\omega_1)=
\end{equation*}
  \begin{equation}
\int_{\Omega}\big( \frac{2}{3}+\frac{(-1)^{\omega_1+1}}{3\times 2^{\omega_1+1}}\big) d P(\omega_1)=\frac{2}{3}.\label{ccs}
\end{equation}
If Collatz Conjecture is valid then relative frequency of even numbers within each Collatz sequence will be equal to $2/3$ which implies

 \begin{equation}
\int_{\Omega}\big(\int_{\Omega}g(\omega_1,\omega_2)dP(\omega_1)\big)d P(\omega_2)=\int_{\Omega}2/3 d P(\omega_2)=\frac{2}{3}.\label{ccs}
\end{equation}

This ends the proof of Theorem 5.1.
\end{proof}

\begin{thm} Let define $h : \Omega \times \Omega \to {\bf R}$ by the following formula: $h(\omega_1,\omega_2)=1$ if $\xi_{\omega_1}(\omega_2)=1 \pmod{3}$ and $h(\omega_1,\omega_2)=0$, otherwise. Under Collatz conjecture, we have that  $h \notin \mathcal{G}$.
\end{thm}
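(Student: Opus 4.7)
The plan is to imitate the structure of the proof of Theorem 5.1: I would evaluate the two iterated integrals of $h$ separately and exhibit that they take different numerical values. Since $\mathcal{G}$ is defined precisely as the class of functions for which the two iterated integrals agree, any discrepancy forces $h \notin \mathcal{G}$.

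For the iterated integral in which $\omega_2$ is integrated first, I would copy the mechanism of Theorem 5.1 but substitute Theorem 4.1 for Theorem 3.1. Specifically, for each fixed $\omega_1 = n$, the inner integral $\int_{\Omega} h(n,\omega_2)\,dP(\omega_2)$ coincides with the relative frequency of $k \in \mathbf{N}$ with $\xi_n(k) \equiv 1 \pmod{3}$; by Lemmas 4.2 and 2.6 this equals
\[\nu^{(n)}(\epsilon_0) = \frac{3}{5} + \frac{(-1)^{n+1}}{15 \times 2^{2(n-1)}}.\]
Since this quantity tends to $3/5$ as $n \to \infty$, Lemma 2.5 yields
\[\int_{\Omega}\Bigl(\int_{\Omega} h(\omega_1,\omega_2)\,dP(\omega_2)\Bigr)\,dP(\omega_1) = \frac{3}{5}.\]

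For the opposite order of integration, I would instead exploit the Collatz conjecture trajectory-by-trajectory. For every fixed $\omega_2 \in \mathbf{N}$ the conjecture furnishes an $N = N(\omega_2)$ with $\xi_N(\omega_2) = 1$, after which the orbit cycles as $1 \to 4 \to 2 \to 1$, giving the reductions $1, 1, 2, 1, 1, 2, \ldots$ modulo $3$. Thus the set $\{n \in \mathbf{N} : \xi_n(\omega_2) \equiv 1 \pmod{3}\}$ differs from a union of two residue classes modulo $3$ only on a finite initial segment. Since every singleton has $P$-measure $0$ (by shift-invariance together with $P(\mathbf{N})=1$) and $P(p+3\mathbf{N})=1/3$ by Lemma 2.4, finite additivity gives inner integral $2/3$ for every $\omega_2$, whence
\[\int_{\Omega}\Bigl(\int_{\Omega} h(\omega_1,\omega_2)\,dP(\omega_1)\Bigr)\,dP(\omega_2) = \frac{2}{3}.\]

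The final step is to compare: $3/5 \neq 2/3$, so $h \notin \mathcal{G}$. The subtle point, which I expect to be the main obstacle, is the second computation rather than the first: one must justify that a finite modification of the integrand does not affect its Banach integral, and that the finitely additive measure $P$ really assigns mass $2/3$ to a finite symmetric difference of a union of two residue classes modulo $3$. Both facts follow from shift-invariance of $P$ together with Lemma 2.4, but they deserve explicit mention since the earlier sections never needed to combine these ingredients with a conjectural structural hypothesis about individual orbits.
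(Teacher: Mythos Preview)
Your proposal is correct and follows essentially the same approach as the paper: compute one iterated integral as $3/5$ via Theorem~4.1 and Lemma~2.5, compute the other as $2/3$ via the Collatz conjecture, and conclude from $3/5\neq 2/3$. In fact you supply more justification than the paper does for the value $2/3$, since the paper simply asserts that the inner integral over $\omega_1$ equals $2/3$ without spelling out the orbit argument you give.
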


\begin{proof}  Under Collatz conjecture  we get
\begin{equation}
\int_{\Omega}\big(\int_{\Omega}h(\omega_1,\omega_2)dP(\omega_1)\big)d P(\omega_2)=\int_{\Omega} 2/3 d P(\omega_2)=2/3.    \label{ccs}
 \end{equation}
On the other hand, by  Lemmas 2.5 and Theorem 4.1, we get
\begin{equation}
 \int_{\Omega}\big(\int_{\Omega}f(\omega_1,\omega_2)dP(\omega_2)\big)d P(\omega_1)=\int_{\Omega}\big(\frac{3}{5}+ \frac{(-1)^{\omega_1+1}}{15 \times 2^{2(\omega_1-1)}} \big) dP(\omega_1)=3/5. \label{ccs}
 \end{equation}

This means that  $h \notin \mathcal{G}$ and Theorem 5.2 is proved.
\end{proof}

\begin{cor}If $h \in \mathcal{G}$ then Collatz conjecture fails.

\end{cor}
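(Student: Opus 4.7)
The plan is to observe that Corollary 5.1 is simply the contrapositive of Theorem 5.2, so no new work is required beyond citing that theorem and unpacking the logical structure. Theorem 5.2 asserts the implication "Collatz conjecture $\Rightarrow h \notin \mathcal{G}$"; taking the contrapositive gives exactly "$h \in \mathcal{G} \Rightarrow$ Collatz conjecture fails", which is the claim.

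Concretely, I would proceed in two short steps. First, I would recall the two integral identities already established in the proof of Theorem 5.2: namely, that under the Collatz conjecture the inner integral $\int_{\Omega} h(\omega_1,\omega_2)\, dP(\omega_1)$ equals $2/3$ $P$-a.e., whence the first iterated integral equals $2/3$, while Theorem 4.1 together with Lemma 2.5 forces the second iterated integral to equal $3/5$. These two values disagree, so the equality defining membership in $\mathcal{G}$ cannot hold if the Collatz conjecture is true.

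Second, I would turn this into the desired contrapositive: assuming $h \in \mathcal{G}$, the two iterated integrals of $h$ with respect to $P$ coincide by definition of $\mathcal{G}$. Combined with Step 1, this means the Collatz conjecture cannot be valid — if it were, the two integrals would take the incompatible values $2/3$ and $3/5$, contradicting the assumed equality. Hence the Collatz conjecture must fail, which is the statement of Corollary 5.1.

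There is essentially no obstacle here; the only subtlety worth double-checking is that the evaluation of $3/5$ in Theorem 5.2 does \emph{not} depend on the Collatz conjecture (it rests only on Theorem 4.1 and Lemma 2.5), while the evaluation of $2/3$ does. This asymmetry is precisely what makes the contrapositive informative: the \emph{other} side of the Fubini-type equality is forced to $2/3$ only when Collatz holds, so assuming the equality in $\mathcal{G}$ together with the unconditional value $3/5$ rules out Collatz. No further calculation is needed.
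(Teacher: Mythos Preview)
Your proposal is correct and matches the paper's approach: the paper states Corollary~5.1 without proof immediately after Theorem~5.2, treating it as the obvious contrapositive. Your additional remark that the value $3/5$ is unconditional while $2/3$ depends on Collatz is accurate and helpful, though more detail than the paper itself provides.
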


Let ${\bf ZFC}$  denotes the Zermelo-Fraenkel set theory with Axiom of Choice(see, for example \cite{Jech78}).   In context with Theorem 5.2 and Corollary 5.1 we state the following

\begin{prob} Is the sentence ''The function $h$ belongs to the class $\mathcal{G}$" consistent with ${\bf ZFC}$ ?
\end{prob}

\begin{defn}We say that a  mapping  $ T : \Omega  \to \Omega $ has  an asymptotic  mixing  property $\pmod{3}$ if there exists an asymptotic sequence
\begin{equation}
\big(\lim_{m \to \infty}T^{3(m-1)+i}(k)\big)_{k \in \mathbf{N}}    \label{ccs}
 \end{equation}
 for each $i(i=0,1,2)$  and the following two conditions hold:

 (i) A relative frequency of numbers of the type $3m+1$ in each
asymptotic sequence is equal to the limit of the sequence of  relative frequencies of numbers of the form $3m+1$ in the
sequence $(T^n(\omega))_{\omega  \in  \Omega}$ of $n$-th coordinates of all sequences $((T^n(\omega))_{n \in N})_{\omega  \in  \Omega}$ when $n$ tends to $\infty$;

  (ii) A relative frequency of even numbers  in each
asymptotic sequence is equal to the limit of the sequence of  relative frequencies of even numbers  in the
sequence $(T^n(\omega))_{\omega  \in  \Omega}$ of $n$-th coordinates of all sequences $((T^n(\omega))_{n \in N})_{\omega  \in  \Omega}$  when $n$ tends to $\infty$.

 \end{defn}

\begin{thm} Under Collatz conjecture, the Collatz mapping has no an asymptotic   mixing   property$\pmod{3}$.
\end{thm}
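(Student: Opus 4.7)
The plan is to derive a contradiction from the cyclic structure forced by Collatz conjecture, using condition (i) of Definition 5.1 combined with Corollary 4.1.

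First I would set up the asymptotic sequences. Assuming Collatz conjecture, every orbit $(\xi_n(k))_{n \in \mathbf{N}}$ eventually enters the $3$-cycle $1 \to 4 \to 2 \to 1$. Hence for every $k \in \mathbf{N}$ there exists $N(k)$ such that $\xi_n(k) \in \{1,2,4\}$ for all $n \geq N(k)$, and for each $i \in \{0,1,2\}$ the subsequence $(\xi_{3(m-1)+i}(k))_{m \in \mathbf{N}}$ is eventually constant. Denote its limit by $\ell_i(k) \in \{1,2,4\}$; this shows that the three asymptotic sequences $(\ell_i(k))_{k \in \mathbf{N}}$ required by Definition 5.1 all exist.

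The decisive observation is a book-keeping one. For each fixed $k$, as $i$ ranges over $\{0,1,2\}$, the triple $(\ell_0(k),\ell_1(k),\ell_2(k))$ is a cyclic permutation of $(1,4,2)$, since the eventual orbit traces out the $3$-cycle with period exactly $3$. In particular, for each $k$ there is exactly one index $i$ with $\ell_i(k)=2$. Consequently, the sets
\begin{equation}
C_i = \{k \in \mathbf{N} : \ell_i(k) = 2\}, \qquad i = 0,1,2,
\end{equation}
partition $\mathbf{N}$, so their relative frequencies (if they exist) must sum to $1$.

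Now I would close the argument by a density count modulo $3$. In the alphabet $\{1,2,4\}$ one has $1 \equiv 4 \equiv 1 \pmod{3}$ while $2 \equiv 2 \pmod{3}$, so the relative frequency of numbers of the form $3m+1$ in the asymptotic sequence $(\ell_i(k))_{k \in \mathbf{N}}$ equals $1 - d(C_i)$. If the Collatz mapping had the asymptotic mixing property $\pmod 3$, condition (i) of Definition 5.1 together with Corollary 4.1 would force $1 - d(C_i) = 3/5$, hence $d(C_i) = 2/5$ for each $i \in \{0,1,2\}$. But then
\begin{equation}
1 = d(C_0) + d(C_1) + d(C_2) = 3 \cdot \tfrac{2}{5} = \tfrac{6}{5},
\end{equation}
a contradiction, proving the theorem.

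I do not expect any serious obstacle: the only slightly delicate point is justifying that the three limit sequences are well defined and that the partition identity $C_0 \sqcup C_1 \sqcup C_2 = \mathbf{N}$ really holds, both of which follow immediately from the fact that the terminal cycle has length exactly $3$. It is worth noting that condition (ii) alone would not suffice for the contradiction: the even elements of $\{1,2,4\}$ are $\{2,4\}$, giving two contributions per $k$, and $\sum_i d(B_i \cup C_i) = 2 = 3 \cdot \tfrac{2}{3}$ is consistent. So the argument genuinely exploits the mod-$3$ asymptotic frequency $3/5$ supplied by Section 4, not the mod-$2$ frequency $2/3$ of Section 3.
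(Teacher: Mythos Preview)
Your proof is correct and in fact slightly more economical than the paper's. Both arguments begin the same way: under Collatz, each triple $(\ell_0(k),\ell_1(k),\ell_2(k))$ is a cyclic permutation of $(1,4,2)$, and both finish by invoking condition~(i) together with Corollary~4.1 to reach a numerical contradiction. The difference lies in the middle step. The paper first invokes condition~(ii) and Corollary~3.1 (the limiting frequency of odd numbers is $1/3$) to conclude that $\nu_i(1)=1/3$ for every $i$; combining this with the cyclic relations $\nu_0(4)=\nu_1(2)=\nu_2(1)$ etc.\ forces every $\nu_i(j)$ to equal $1/3$, whence $\nu_0(\{1,4\})=2/3$, contradicting the value $3/5$ imposed by condition~(i). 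You bypass condition~(ii) entirely: your partition observation $C_0\sqcup C_1\sqcup C_2=\mathbf N$ immediately gives $\sum_i d(C_i)=1$, whereas condition~(i) alone forces each $d(C_i)=2/5$. Thus your route shows that already condition~(i) is violated, without needing the parity input from Section~3; the paper's route uses both conditions but makes the individual frequencies $\nu_i(j)$ explicit along the way.
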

\begin{proof} Assume the contrary and let  Collatz mapping has  an asymptotic   mixing$\pmod{3}$   property.
For each $k \in \mathbf{N}$, let look at each Collatz sequence $(\xi_n(k))_{ n  \in {\mathbf{N}}}$ as an element $(\xi_{3(m-1)}(k),\xi_{3(m-1)+1}(k),\xi_{3(m-1)+2}(k))_{m \in \mathbf{N}}$  of the $(\mathbf{N}^3)^{\mathbf{N}}$. For example, \begin{equation}
 (4,2,1,4,2,1 \cdots)=((4,2,1),(4,2,1), \cdots)\big).
\label{ccs}
 \end{equation}
  Notice that under Collatz conjecture,  for each fixed $k \in N$, the sequence $(\xi_{3(m-1)}(k),\xi_{3(m-1)+1}(k),\xi_{3(m-1)+2}(k))_{m \in \mathbf{N}}$ must convergent in $N^3$.
If we  equip the space $(\mathbf{N}^3)^\mathbf{N}$ with  Tychonoff metric $\rho$, defined by
 \begin{equation}
\rho((x_n)_{n \in N},(y_n)_{n \in \mathbf{N}})=\sum_{n \in N}\frac{||x_n-y_n||_3}{2^{n+1}(1+||x_n-y_n||_3)}\label{ccs}
 \end{equation}
for each $(x_n)_{n \in \mathbf{N}},(y_n)_{n \in \mathbf{N}} \in (\mathbf{N}^3)^\mathbf{N}$ (here  $||\cdot||_3$ denotes a usual norm in $R^3$),
 then under  Collatz conjecture  the sequence of $m$-th  three-dimensional coordinates of all Collatz sequence
 \begin{equation}
\big( \big(\xi_{3(m-1)}(k),\xi_{3(m-1)+1}(k),\xi_{3(m-1)+2}(k)\big)_{k \in N}\big)_{m \in \mathbf{N}} \label{ccs}
 \end{equation}
must  converge in $(\mathbf{N}^3)^\mathbf{N}$ with respect to Tychonoff metric. Since  the limit of  three-dimensional coordinates of each Collatz sequence
must belong to the set $\{(4,2,1),(2,1,4),(1,4,2)\}$, we deduce that the limit of the sequence (5.9) when $m$ tends to infinity
 must  belong to the set
 \begin{equation}\{(4,2,1),(2,1,4),(1,4,2)\}^\mathbf{N}.
 \label{ccs}
 \end{equation}
    For $i =0,1,2,$ let denote by $\nu_i(4),\nu_i(2), \nu_i(1)$ the relative frequencies of numbers $4,2,1$ in $i$th Collatz asymptotic sequences, respectively.
    Under Collatz conjecture and by the assumption that the  numbers $1,4,2$  have relative frequencies in each Collatz asymptotic sequences(this follows from the asymptotic   mixing   property$\pmod{3}$ of the Collatz mapping) we get
    \begin{equation}\nu_0(4)=\nu_1(2)=\nu_2(1),~  \nu_0(2)=\nu_1(1)=\nu_2(4),~ \nu_0(1)=\nu_1(4)=\nu_2(2).
 \label{ccs}
 \end{equation}

    On the one hand, by  Corollary 3.1 we know  that the limit of the sequence of  relative frequencies of odd numbers  in the
sequence $(T^n(\omega))_{\omega  \in  \Omega}$ of $n$-th coordinates of all sequences $((T^n(\omega))_{n \in N})_{\omega  \in  \Omega}$  when $n$ tends to $\infty$ is equal to $1/3$. Taking into account this fact and the asymptotic   mixing   property$\pmod{3}$ of the Collatz mapping we deduce that $\nu_0(1)=\nu_1(1)=\nu_2(1)=1/3$. The latter equations and formula (5.11)  imply that
   \begin{equation}\nu_0(4)=\nu_0(2)=\nu_0(1)=\nu_1(4)=\nu_1(2)=\nu_1(1)=\nu_2(4)=\nu_2(2)=\nu_2(1)=1/3.
 \label{ccs}
 \end{equation}

    On the other hand, by Corollary 4.1 we know  that  the limit of the sequence of  relative frequencies of numbers having  a form $3m+1$  in the
sequence $(T^n(\omega))_{\omega  \in  \Omega}$ of $n$-th coordinates of all sequences $((T^n(\omega))_{n \in N})_{\omega  \in  \Omega}$  when $n$ tends to $\infty$ is equal to $3/5$. Now taking  into account this fact and  the asymptotic   mixing   property$\pmod{3}$ of the Collatz mapping we deduce that $\nu_0(\{ 1, 4 \})=\nu_1(\{ 1, 4 \})=\nu_2(\{ 1, 4 \})=3/5$. The latter equalities fail  because
   \begin{equation}3/5=\nu_0(\{ 1, 4 \})\neq \nu_0(1)+\nu_0(4)=2/3.
 \label{ccs}
 \end{equation}

  This ends the proof of the theorem.

\end{proof}

\section{Collatz conjecture for supernatural numbers}

The set $N_S$ of supernatural (Steinitz) numbers (see, for example, \cite{Ribes2000}, \cite{Wilson1998}, \cite{Vourdas2013}) is:
$$
N_S =\{ n =\prod_{p \in \prod} p^{e_p(n)} | e_p(n) \in \mathbf{Z}_0^+\cup \{\infty\} \},
$$
where $\prod$ denotes the set of all prime numbers.

The index $S$ indicates supernatural or Steinitz. Here the exponents can take the value $\infty$, and the product
might contain an infinite number of prime numbers. If there is no danger of confusion, we will use the
simpler notation $e_p$ for the exponents. In this set only multiplication is well defined, and by definition
$$
p^{\infty}\times p^{e_p}=p^{\infty}
$$
for $ e_p \in \mathbf{Z}_0^+\cup \{\infty\}.$

In the special case that all $e_p \neq \infty$ and only a finite number of them are different from zero, then
$\prod_{p \in \prod} p^{e_p(n)} \in  N$ (i.e., $N$ is a subset of $N_S$).

{\bf Notation 6.1.} Let $(e_p)_{p\in \prod}$ (where $ e_p \in \mathbf{Z}_0^+\cup \{\infty\}$) be an infinite sequence of exponents labelled by $p \in \prod$.
Then $(e_p)_{p\in \prod}  \prec (e^{'}_p)_{p\in \prod}$  means that $e_p \le  e^{'}_p$ for all $p\in \prod$. By definition all numbers in $\mathbf{Z}_0^+$ are smaller than $\infty$.

If $(e_p)_{p\in \prod}  \prec (e^{'}_p)_{p\in \prod}$  then we say that $n =\prod_{p \in \prod} p^{e_p}$
is a divisor of $n^{'} =\prod_{p \in \prod} p^{e^{'}_p}$ and we denote it as $n|n^{'}$ or as
$n \prec n^{'}$. An element of $N_S$, corresponding to the sequence where all $e_p = \infty$, is
$$
\Omega=\prod_{p \in \prod} p^{\infty}
$$
This is the maximum element in $N_S$ (every element of $N_S$ is a divisor of $
\Omega$).

Let $\prod_1$ be a subset (finite
or infinite) of $\prod$. We set $\Omega(\prod_1)=\prod_{p \in \prod_1} p^{\infty}.$
It is obvious that $\Omega(\prod_1)|\Omega$.

{\bf Notation 6.2.}Let $n =\prod_{p \in \prod} p^{e_p(n)}$(where  $ e_p(n) \in \mathbf{Z}_0^+\cup \{\infty\}$) be
a supernatural number. We use the
notation

(ai) ~ $\prod^{(\infty)}(n)$  for the set of prime numbers for which $e_p(n) = \infty.$

(aii) ~$\prod^{(\text{fin})}(n)$ for the set of prime numbers for which $0 < e_p(n) < \infty$.

(aiii) ~$\prod^{(0)}(n)$ for the set of prime numbers for which $e_p(n)=0$.

It is obvious that $\prod=\prod^{(0)}(n) \cup  \prod^{(\text{fin})}(n) \cup \prod^{(\infty)}(n)$.

Note that
$$
n=\prod_{p \in \prod^{(\text{fin})}(n)} p^{e_p(n)}\prod_{p \in \prod^{(\infty)}(n)} p^{\infty}.
$$
If $n \in N$  then $\prod^{(\infty)}(n)$ is the empty set and $$
n=\prod_{p \in \prod^{(\text{fin})}(n)} p^{e_p(n)}.
$$
The set $N_S$ (ordered by divisibility) is a directed-complete partial order, with
 as maximum element.
An example of a complete chain in $N_S$ is

$$N_S^{(p)}=\{p, p^2, \cdot, p^{\infty}\}=N^{(p)} \cup \{p^{\infty}\}
$$
Here the supremum is $p^{\infty}$. Other examples of chains in $N_S$ are
$$
p_1 \prec p_1^2 \prec \cdots \prec p_1^{\infty} \prec  p_1^{\infty} p_2 \prec  p_1^{\infty} p^2_2
\prec  \cdots   p_1^{\infty} p_2^{\infty}; ~p_1,p_2 \in \prod.
$$
$$
p_1p_2 \prec (p_1p_2)^2 \prec \cdots \prec p_1^{\infty}p_2^{\infty};
$$
$$
2^{\infty} \prec  2^{\infty}3^{\infty} \prec  2^{\infty}3^{\infty}5^{\infty} \prec \cdots \prec \Omega.
$$

In the first and second chain the supremum is $p_1^{\infty}p_2^{\infty}$
and in the last chain the supremum is $\Omega$.

\begin{lem}Let $\ell_2$ be a Hilbert space of all square summable real valued sequences.
Let $g:N \to \ell_2$ be defined by $g(n)=ne_1$ where $e_1=(1,0,0,\cdots)$. Let $s:N_s  \to \ell_2 $ be a one-to-one mapping such that
$s|_N=g$, i.e., $s(n)=g(n)$ for each $n \in N$.  We set
$$
x \oplus_s y = s^{-1}(s(x)+ s(y))
$$
for $x,y \in N_S$. Then the operation $\oplus_s$ has the following properties:

{\bf Closure }. If $x,y \in N_S$ then $x \oplus_s y \in N_S$.

{\bf Commutativity.}~$x \oplus_s y= y \oplus_s x$ for all $x,y \in N_S$;

{\bf Associativity. }~$x \oplus_s y  \oplus_s z =(x \oplus_s y)  \oplus_s z=x \oplus_s (y  \oplus_s z)$ for all $x,y,z \in N_S$;

{\bf Extensionality  of the addition operation on $N$.}~  $n \oplus_s m= n+m$ for each $n,m \in N$.
\end{lem}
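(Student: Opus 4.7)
The plan is to reduce each of the four properties of $\oplus_s$ to the corresponding property of ordinary addition in $\ell_2$, using only the fact that $s$ is a bijection and that $s|_N = g$. First I would address the one non-trivial technical point in the setup: for $x \oplus_s y = s^{-1}(s(x)+s(y))$ to be well-defined for all $x,y \in N_S$, one needs the vector $s(x)+s(y)$ to lie in the range $s(N_S)$. The cleanest way is to note that $|N_S|=\mathfrak{c}=|\ell_2|$ (the set of exponent sequences $(e_p)_{p\in\prod}$ over a countable index set with values in $\mathbb{Z}_0^+\cup\{\infty\}$ has cardinality continuum), that $|N_S\setminus N|=\mathfrak{c}$, and that $|\ell_2\setminus g(N)|=\mathfrak{c}$; hence one can pick any bijection between $N_S\setminus N$ and $\ell_2\setminus g(N)$ and paste it with $g$ on $N$ to obtain a one-to-one $s$ whose range is all of $\ell_2$. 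Throughout the rest of the proof I read ``one-to-one'' as bijective in this sense, so that $s^{-1}:\ell_2\to N_S$ is totally defined and satisfies $s\circ s^{-1}=\mathrm{id}_{\ell_2}$ and $s^{-1}\circ s=\mathrm{id}_{N_S}$.

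With that setup, closure is immediate: for $x,y\in N_S$, the vector $s(x)+s(y)$ belongs to $\ell_2$, hence $s^{-1}(s(x)+s(y))\in N_S$. Commutativity follows from the commutativity of addition in $\ell_2$:
\begin{equation*}
x\oplus_s y = s^{-1}(s(x)+s(y)) = s^{-1}(s(y)+s(x)) = y\oplus_s x.
\end{equation*}
For associativity I would apply $s\circ s^{-1}=\mathrm{id}$ twice to collapse the nested expressions:
\begin{equation*}
(x\oplus_s y)\oplus_s z = s^{-1}\bigl(s(s^{-1}(s(x)+s(y)))+s(z)\bigr) = s^{-1}\bigl(s(x)+s(y)+s(z)\bigr),
\end{equation*}
and symmetrically $x\oplus_s(y\oplus_s z)=s^{-1}(s(x)+s(y)+s(z))$, where the common right-hand side is unambiguous because addition in $\ell_2$ is associative.

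Finally, for extensionality on $N$, take $n,m\in N$. Since $s|_N=g$ and $g(k)=ke_1$, we have $s(n)+s(m)=ne_1+me_1=(n+m)e_1=g(n+m)=s(n+m)$. Because $n+m\in N\subset N_S$ and $s$ is one-to-one, $s^{-1}(s(n+m))=n+m$, so $n\oplus_s m=n+m$. The main obstacle I anticipate is purely expository rather than mathematical: making clear that the definition of $\oplus_s$ presupposes $s$ to be a bijection onto $\ell_2$ (not merely an injection), and isolating the cardinality argument that guarantees such an $s$ extending $g$ exists; once that is settled, each property is a one-line consequence of the corresponding property of $(\ell_2,+)$.
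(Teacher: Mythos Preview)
Your proof is correct and follows essentially the same approach as the paper: each property of $\oplus_s$ is reduced to the corresponding property of addition in $\ell_2$ via the identities $s\circ s^{-1}=\mathrm{id}_{\ell_2}$ and $s^{-1}\circ s=\mathrm{id}_{N_S}$. The one thing you add is the cardinality argument ensuring that $s$ can be taken to be a bijection onto all of $\ell_2$ (so that $s^{-1}(s(x)+s(y))$ is always defined), a point the paper's proof uses without comment.
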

\begin{proof}{\bf Closure }. If $x,y \in N_S$ then $x \oplus_s y =s^{-1}(s(x)+ s(y)) \in N_s$.

{\bf Commutativity.} For each $x,y \in N_S $ we have
$$
x \oplus_s y = s^{-1}(s(x)+ s(y))=s^{-1}(s(y)+ s(x))=y \oplus_s x.
$$
{\bf Associativity. }~For all  $x, y,z \in N_S$, we have

$$(x \oplus_s y)\oplus_s  z=s^{-1}[s(x \oplus_s y)+
s(z)]=s^{-1}[s(s^{-1}(s(x)+ s(y)))+ s(z)]=
$$
$$
s^{-1}[s(x)+ s(y)+ s(z)]=x \oplus_s y  \oplus_s z$$
and

$$
x \oplus_s y  \oplus_s z=s^{-1}[s(x)+
(s(y)+ s(z))]= $$
$$
=s^{-1}[s(x)+
s(s^{-1}(s(y)+ s(z)))]=
$$
$$s^{-1}[s(x)+ s(y
\oplus_s z)]=x \oplus_s (y \oplus_s z).
$$

{\bf Extensionality  of the addition operation on $N$.}~ For each $n,m \in N$ we have
$$
n \oplus_s m = s^{-1}(s(x)+ s(y))=s^{-1}(ne_1+ me_1)=s^{-1}((n+m)e_1)=n+m.
$$

\end{proof}

\begin{rem} An operation $\oplus_s$ defined by Lemma 6.1 is called a plus operation in $N_S$.  Note that there does not exist  a plus operation $\oplus_s$ in $N_s$ such
$$
\underbrace{m \oplus_s \cdots \oplus_s m}_n=nm
$$
for each $n \in N$ and $m \in N_s$.
Indeed, if we assume that such a plus operation $\oplus_s$ exists in $N_s$, then we get
$$
3 \cdot 2^{\infty}= 2^{\infty}\oplus_s 2^{\infty}\oplus_s 2^{\infty}= (2^{\infty}\oplus_s 2^{\infty})\oplus_s 2^{\infty}=
2\cdot 2^{\infty} \oplus_s 2^{\infty}=2^{\infty} \oplus_s 2^{\infty}=2\cdot 2^{\infty}=2^{\infty}.
$$
The latter equality contradict to the non-equality
$$
3 \cdot 2^{\infty} \neq 2^{\infty}
$$
and Remark 6.2 is proved.
\end{rem}

{\bf Collatz conjecture for supernatural numbers.} Let $\oplus_s$ be a plus operation in $N_s$ which comes from Lemma 6.1. Take any supernatural number $n$. If $n$ is even, divide it by $2$ to get $n / 2$. If $n$ is odd, multiply it by $3$ and add $1$ to obtain $3n \oplus_s 1$.
Repeat the process infinitely. The conjecture is that no matter what supernatural number you start with, you will always eventually reach $1$.

\begin{thm} Collatz conjecture fails for supernatural numbers
\end{thm}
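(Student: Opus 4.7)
The plan is to exhibit an explicit supernatural number whose Collatz orbit never reaches $1$, thereby falsifying the conjecture. The natural candidate is $n_0 = 2^{\infty}$ (or more drastically the global maximum $\Omega = \prod_{p \in \prod} p^\infty$); I will work with $n_0 = 2^{\infty}$ since it is the cleanest.

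First, I would record that $n_0 = 2^{\infty}$ is even in the sense of $N_S$: its factorization has $e_2(n_0) = \infty \ge 1$, so $2 \mid n_0$. Thus the Collatz rule requires us to apply the ``divide by $2$'' branch. Next, I would invoke the absorption rule $2^{\infty} \times 2 = 2^{\infty}$ (stated explicitly in Section 6 as $p^{\infty} \times p^{e_p} = p^{\infty}$) to argue that the only reasonable value of $n_0/2$ inside $N_S$ is $2^{\infty}$ itself: any $x \in N_S$ satisfying $x \times 2 = 2^{\infty}$ must have $e_2(x) = \infty$ and $e_p(x) = 0$ for $p \neq 2$, i.e.\ $x = 2^{\infty}$. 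Hence the Collatz update fixes $n_0$:
\begin{equation*}
\xi(2^{\infty}) \;=\; 2^{\infty}/2 \;=\; 2^{\infty}.
\end{equation*}

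Iterating, the Collatz orbit $(\xi_n(2^{\infty}))_{n\in\mathbf N}$ is the constant sequence $2^{\infty},2^{\infty},2^{\infty},\ldots$, which never visits the element $1 \in N \subset N_S$. This directly contradicts the statement of the Collatz conjecture for supernatural numbers, which demands that every starting point reach $1$ in finitely many steps. Therefore the conjecture fails, and the theorem is established.

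The only subtle point, and the one I would address carefully in the write-up, is that ``division by $2$'' is not explicitly axiomatized in Section 6 for supernatural inputs. The obstacle is conceptual rather than technical: one must check that the only sensible interpretation of $2^\infty/2$ inside $N_S$ is $2^\infty$. This follows from the uniqueness argument above together with the absorption law, and one could alternatively sidestep any ambiguity by starting from $\Omega = \prod_{p} p^\infty$ and running the same argument, since $\Omega$ is even and $\Omega/2 = \Omega$ by the same absorption principle, yielding a constant orbit that never reaches $1$.
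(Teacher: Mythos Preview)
Your proof is correct and follows essentially the same idea as the paper: exhibit a supernatural number with $e_2=\infty$ (the paper takes any $n$ with $2^{\infty}\preceq n$; you take the specific instance $n_0=2^{\infty}$), observe that such a number is even and that division by $2$ leaves it fixed via the absorption rule $p^{\infty}\cdot p=p^{\infty}$, so the Collatz orbit is constant and never reaches $1$. Your write-up is in fact slightly more careful than the paper's in justifying why $2^{\infty}/2=2^{\infty}$.
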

\begin{proof}Let take a supernatural number $n$ for which $2^{\infty} \preceq n$. Then  if we start with $n$ then we get a stationary sequence $n,n, \cdots$. Since  $2^{\infty} \preceq n$ we deduce that $n$ differs from $1$ and we no reach $1$.
\end{proof}

%\section*{Acknowledgements}
%And this is an acknowledgements section with a heading that was produced by the
%$\backslash$section* command. Thank you all for helping me writing this
%\LaTeX\ sample file.


\begin{thebibliography}{9}

\bibitem{Banach1932}
\textsc{Banach, S} (1932).
\textit{Th$\acute{e}$orie des Op$\acute{e}$rations Lin$\acute{e}$aires}. Warszawa


\bibitem{Bollobas1980}
\textsc{Bollob$\acute{a}$s B.} (1932).
A probabilistic proof of an asymptotic formula for the number of labelled regular graphs. \textit{European J. Combin.}
\textbf{1(4))} 311--316.
\MR{0595929}



\bibitem{Bumby1969}
\textsc{Bumby R., Ellentuck E.} (1969).
Finitely additive measures and the first digit problem. \textit{Fund. Math.}
\textbf{65)} 33--42.
\MR{0249386}




\bibitem{r1}
\textsc{Chamberland, Marc.} (1996).
A continuous extension of the $3x+1$ problem to the real line. \textit{Dynam. Contin. Discrete Impuls. Systems}
\textbf{2(4)} 495--509.
\MR{1431647}


\bibitem{r2}
\textsc{Chamberland, M.} (2003).
An update on the $3x+1$ problem. \textit{(Catalan) Butl. Soc. Catalana Mat.}
\textbf{18(1)} 19--45.
\MR{2038315}



\bibitem{Jech78}
\textsc{Jech  T.} (1978). \textit{Set theory. Pure and Applied Mathematics.}, Academic Press [Harcourt Brace Jovanovich, Publishers], New York-London,
\MR{0506523}


\bibitem{Lindvall77}
\textsc{Lindvall T.} (1977).
A probabilistic proof of Blackwell's renewal theorem. \textit{Ann. Probability}
\textbf{3} 482--485.
\MR{0440726}



\bibitem{r3}
\textsc{Simons  J., de Weger B.} (2005).
Theoretical and computational bounds for $m$-cycles of the $3n+1$-problem.\textit{Acta Arith.}
\textbf{117(1)} 51-70.
\MR{2110503}


\bibitem{r4}
\textsc{Shiryaev A.} (2012). \textit{Problems in probability.}, Translated by Andrew Lyasoff. Problem Books in Mathematics. Springer, New York
\MR{2961901}




\bibitem{r5}
\textsc{van Douwen  E.} (1992).
Finitely additive measures on ${\bf N}$.\textit{Topology Appl.}
\textbf{47(3)} 223--268.
\MR{1192311}


\bibitem{Ribes2000}
\textsc{Ribes L., Zalesskii P.}(2000):\textit{ Profinite groups,} Springer, Berlin
\MR{1775104}


\bibitem{Wilson1998}
\textsc{Wilson J.}(1998) \textit{Profinite groups.} London Mathematical Society Monographs. New Series, 19. The Clarendon Press, Oxford University Press, New York
\MR{1691054}



\bibitem{Vourdas2013}
\textsc{Vourdas, A.}(2013).
Quantum mechanics on profinite groups and partial order.\emph{ J. Phys. A }
\textbf{46(4)}, 043001,49 pp.
\MR{3029858}








\end{thebibliography}
\end{document}